\def\plist@algorithm{Alg.\space}
\newcommand{\Matlab}{\textsc{Matlab }}
\newcommand{\mlin}[1]{\mbox{\tt{#1}}}
\DeclareMathOperator{\rank}{rank}
\DeclareMathOperator{\vec2}{vec}
\newtheorem{theorem}{Theorem}
\newtheorem{lemma}[theorem]{Lemma}%
\begin{document}

\title[Article Title]{Stability Improvements for Fast Matrix Multiplication}


\author*[1]{\fnm{Vermeylen} \sur{Charlotte}}\email{charlotte.vermeylen@kuleuven.be}

\author[1]{\fnm{Van Barel} \sur{Marc}}\email{marc.vanbarel@kuleuven.be}

\affil[1]{\orgdiv{Department of Computer Science}, \orgname{KU Leuven}, \orgaddress{\street{Celestijnenlaan 200 A}, \city{Heverlee}, \postcode{3001}, \country{Belgium}}}


\abstract{We implement an Augmented Lagrangian method to minimize a constrained least-squares cost function designed to find polyadic decompositions of the matrix multiplication tensor. We use this method to obtain new discrete decompositions and parameter families of decompositions. Using these parametrizations, faster and more stable matrix multiplication algorithms can be discovered.}

\keywords{matrix multiplication, polyadic decomposition, optimization}



\maketitle

\section{Introduction}\label{sec1}
In this paper, we propose a new numerical optimization algorithm to discover faster and more stable matrix multiplication algorithms. To speed up matrix multiplication, we want to find a \emph{(canonical) polyadic decomposition ((C)PD)} of the so called \textit{matrix multiplication tensor (MMT)} \cite{Strassen1969}. This tensor is defined by the bilinear equations corresponding to multiplying two matrices $A \in \mathbb{R}^{m\times p}$ and $B \in \mathbb{R}^{p \times n}$:
\begin{equation} \label{eq:def_MMT}
\begin{split}
\text{vec} ((AB)^\top) = T_{mpn} \cdot_1 \text{vec}(A) \cdot_2 \text{vec}(B),
\end{split}
\end{equation}
where $\vec2\left( \cdot \right)$ denotes the column-wise vectorization operator, $T_{mpn}$ is the MMT of dimension $mp \times pn \times mn$, and $\cdot_1$ and $\cdot_2$ denote the multiplication along the first and second dimension of $T_{mpn}$ respectively. Because of this definition, $T_{mpn}$ is a sparse tensor consisting of $mpn$ ones: $T_{mpn}(i_1 + (i_2-1)m,j_1 + (j_2-1)p,k_1 + (k_2-1)n) = 
1 \text{ if } i_1 = k_2, i_2 = j_1$, and $j_2 = k_1$, and zero else,
for all $i_1,k_2 = 1 \dots m$, $i_2,j_1 = 1 \dots p$, and $j_2, k_1 = 1 \dots n$.
A PD of $T_{mpn}$ of \emph{rank} $R$ is a decomposition of $T_{mpn}$ into $R$ rank-1 tensors:
\begin{equation} \label{eq:CPD}
\begin{split}
T_{mpn} = \sum_{r=1}^R u_r \otimes v_r \otimes w_r,
\end{split}
\end{equation}
where $\otimes$ denotes the tensor product and $u_r,v_r$, and $w_r$ are vectors in $\mathbb{R}^{mp}$, $\mathbb{R}^{pn}$, and $\mathbb{R}^{mn}$ respectively. These vectors are collected in three \emph{factor matrices} $U$, $V$, and $W$:
$U = \left[ u_{1} \cdots u_{R} \right]$, $V = \left[ v_{1} \cdots v_{R} \right]$, and $W = \left[ w_{1} \cdots w_{R} \right]$.
A PD of rank $R$ is written more short as $\mathrm{PD}_R$ and a PD of rank $R$ of a tensor $T$ as $\mathrm{PD}_R(T)$. The minimal $R$ for which a PD of a certain tensor exists is called the rank $R^*$ of this tensor or $R^*(T)$ and a PD of this rank a CPD or $\mathrm{PD}_{R^*}(T)$. With a $\mathrm{PD}_R$ \eqref{eq:CPD} of $T_{mpn}$ and using \eqref{eq:def_MMT}, we obtain the following \emph{basis algorithm} to multiply any two matrices $A \in \mathbb{R}^{m\times p}$ and $B \in \mathbb{R}^{p \times n}$:
\begin{align} \label{eq:vecAB_TM}
\text{vec}\left( \left(AB \right)^\top \right) &= \sum_{r=1}^R \langle u_r, \text{vec}(A) \rangle \langle v_r,  \text{vec}(B) \rangle w_r  \nonumber \\
\Leftrightarrow~  AB &= \sum_{r=1}^R \text{trace} \left( U_r A^\top \right) \text{trace} \left(V_r  B^\top \right) W_r^\top,
\end{align}
where $\langle \cdot \rangle$ denotes the standard vector inner product and $U_r = \text{reshape}(u_r,[m,p])$, $V_r = \text{reshape}(v_r,[p,n])$, and $W_r = \text{reshape}(w_r,[n,m])$, for all $r=1 \dots R$. The number of multiplications between (linear combinations of) elements of $A$ and $B$ in \eqref{eq:vecAB_TM}  is reduced to $R < mpn$, where the upper bound holds for standard matrix multiplication. Such multiplications are called ‘active' because, when these algorithms are applied recursively, they determine the asymptotic complexity of matrix multiplication \cite{Landsberg_complexity}.

However, $R^*(T_{mpn})$ is not known for most combinations of $(m,p,n)$. Counterexamples are $T_{222}$, for which the set of PDs is completely understood and $R^*$ is known to be seven \cite{DeGroote1978b}, and $T_{223}$, for which the rank is known to be 11 \cite{ALEKSEYEV_1985}. We call $\mathrm{PD}_{7}(T_{222})$ originally discovered by Strassen $\mathrm{PD}_{\mathrm{Strassen}}$ \cite{Strassen1969}. For other combinations of $m$, $p$, and $n$, lower bounds on the rank exist but none of these lower bounds are attained in practice. For example, the lower bound on $R^*(T_{333})$ is proven to be 19 \cite{Blaser2003}, and the upper bound is 23 \cite{Laderman1976, Ballard2019, Smirnov2013, new_ways_33_2021}. We call the lowest rank for which an exact PD of $T_{mpn}$ is known in the literature $\tilde{R}(T_{mpn})$. For an overview of $\tilde{R}(T_{mpn})$ for various $m$, $p$, and $n$, we refer to \cite[Table 1]{Smirnov2013} and \cite[Fig. 1]{DeepMind_2022}. Remark that the rank of $T_{mpn}$ is equal to the rank of $T_{pnm}$ or any other permutation of $m$, $p$, and $n$. This is true because we can obtain a $\mathrm{PD}_R$ of $T_{pnm}$ and $T_{nmp}$ by cyclically permuting the factors in the PD:
\begin{align*}
T_{pnm} = \sum_{r=1}^R v_r \otimes w_r \otimes u_r, && T_{nmp} = \sum_{r=1}^R w_r \otimes u_r \otimes v_r,
\end{align*}
and additionally, it can be shown that
$T_{mnp} = \sum_{r=1}^R \vec2 \left(W_r^\top \right) \otimes \vec2(V_r^\top) \otimes \vec2(U_r^\top)$, and similarly for $T_{npm}$ and $T_{pmn}$.

To find a $\mathrm{PD}_R$ of $T_{mpn}$, we use the following least-squares cost function:
\begin{equation} \label{eq: unconstr_LSQ_min}
\min_x  \underbrace{\frac{1}{2} \left\lVert  F(x)- \text{vec} \left(T_{mpn} \right) \right\rVert^2}_{=: f(x)},
\end{equation}
where $\lVert \cdot \rVert $ denotes the $\mathrm{L}_2$-norm, and $F(x)$ is a vector function defined as
\begin{align} \label{eq:F(x)}
F\left(x \right) &:=  \mathrm{vec} \left( \sum_{r=1}^R u_r \otimes v_r \otimes w_r \right), & x := \begin{bmatrix}
\vec2(U)^\top & \vec2(V)^\top & \vec2(W)^\top  
\end{bmatrix}^\top.
\end{align}
Most standard optimization algorithms fail to converge to a global minimum of \eqref{eq: unconstr_LSQ_min} and, in general, finding the CPD of a tensor is NP-hard \cite{tensor_rank_NP_hard_1989}. One of the causes is the nonuniqueness of a $\mathrm{PD}_R$ and consequently the minima are not isolated. This can be resolved, as long as $R$ is smaller than a certain function of $m$, $p$, and $n$, by using manifold optimization \cite{RTR_CPD_Nick_2018}. One of the important properties of manifolds is that, e.g., two PDs on the manifold may not generate the same tensor\footnote{For more information on manifolds and optimization on manifolds, we refer to \cite{intro_manifolds,matrix_manifolds}.}. The $\mathrm{PD}$s of $T_{mpn}$ do not satisfy the criterion on the rank described in \cite{RTR_CPD_Nick_2018}, and furthermore, have additional continuous invariances compared to generic PDs. Some of these \emph{invariance transformations} were discovered by De Groote \cite{DeGroote1978a}, and others by parametrizing discrete PDs \cite{params_R23_1986,new_ways_33_2021}. Because these additional invariances are not completely understood, nor fully discovered as we show different new parametrizations in this paper, it is not known if the set of $\mathrm{PD}$s of $T_{mpn}$ of rank $R$ belongs to a manifold.

Another difficulty is that $T_{mpn}$ is known to admit \emph{border rank} PDs (BRPDs) \cite{Bini1980approximate,schonhagen_partial_total_MM_1981,Landsberg2015a}. BRPDs are approximate solutions of \eqref{eq: unconstr_LSQ_min} with elements that grow to infinity when the error of the approximation goes to zero. The minimal rank for which a BRPD of a certain tensor exists is called the border rank $R_b$ of this tensor. The border rank is in general smaller than the canonical rank. The border rank for different combinations of $m$, $p$, and $n$, was, e.g., investigated in \cite{Smirnov2013} using a heuristic alternating least-squares (ALS) method.

Since the total number of elements of the factor matrices grows rapidly with the size of the matrices, solving \eqref{eq: unconstr_LSQ_min} is only feasible for relatively small values of $m$, $p$, and $n$, e.g., $m,p,n \leq 5$. The resulting $\mathrm{PD}_R$ can be applied recursively to, e.g., multiply matrices of size $m^k \times p^k$ and $p^k \times n^k$. Such a recursive algorithm requires $\mathcal{O} \left( cR^k\right)$ operations, where the constant $c$ depends on the number of scalar multiplications and nonzeros in the factor matrices. When $k$ is sufficiently large and $c$ sufficiently small, fewer operations are required compared to standard matrix multiplication. To decrease the constant $c$, we search for sparse PDs, with elements in a discrete set $\lbrace{-1,0,1 \rbrace}$. This set can eventually be extended with powers of 2, since this is not a costly operation when implemented in hardware. Such PDs result in practical FMM algorithms. 
Thus, a numerical solution of \eqref{eq: unconstr_LSQ_min} still has to be transformed into a sparse and discrete PD. The \emph{invariance transformations} discovered in \cite{DeGroote1978a}, which are further discussed in \Cref{sec:inv_transf}, can be used to obtain such PDs \cite{Tichavsky2017}. As is known from \cite{berger2022equivalent} however, not all PDs of $T_{mpn}$ are discretizable in this way.

We propose to solve \eqref{eq: unconstr_LSQ_min} in combination with a bound constraint on the elements of the factor matrices to prevent the convergence to BRPDs and additionally we propose different equality constraints, which are discussed in \Cref{sec:constraints}, to obtain discrete PDs and parameter families of PDs. To solve this constrained optimization problem, we implement an Augmented Lagrangian (AL) method, given in \Cref{sec:ALM}, and use the Levenberg-Marquardt (LM) method to solve the subproblem of the minimization to $x$. The LM method is very suitable for least-squares problems with a singular Jacobian matrix, such as \eqref{eq: unconstr_LSQ_min} \cite{Nocedal1999,madsen2004}. We discuss the LM method in more detail in \Cref{sec:LM}.

Constrained optimization has already been used multiple times in the literature to find sparse, discrete solutions of \eqref{eq: unconstr_LSQ_min}. For example, the method in \cite{Smirnov2013} uses an ALS method in combination with a quadratic penalty (QP) term to bound the factor matrix elements to a certain interval. 
On the other hand, the method in \cite{Tichavsky2017} implements an LM method with a constraint on the norm of the factor matrices using a Lagrange parameter. 
However, both methods were, e.g., not able to obtain exact PDs for $T_{444}$, rank 49, whereas such PDs are known to exist. The method that we propose can be considered as a combination of the methods in \cite{Tichavsky2017,Smirnov2013}; we use the LM method and Lagrange parameters as in \cite{Tichavsky2017} and a constraint to bound the elements of the factor matrices as in \cite{Smirnov2013}. We also tested the LM method in combination with a QP term but the advantage of the AL method is that the constraints are satisfied more accurately during optimization. With this method we are, e.g., able to obtain new (discrete) PDs for $T_{444}$, rank 49.

A last element to consider is the stability of practical FMM algorithms \cite{Ballard2016_stab}. The stability of an FMM algorithm is determined by a pre- and stability factor. We give the definition of these factors in \Cref{sec:stab}. We improve the stability by reducing both factors for several combinations of $m$, $p$, and $n$, and for some of these combinations, we additionally reduce the number of nonzeros and consequently speed up matrix multiplication by reducing the constant $c$ in the asymptotic complexity. To improve the stability, we search for sparse parameter families of PDs and optimize the stability factor within the family. In general, the sparser the family, the lower the prefactor.

For $T_{333}$, rank 23, several parameter families are known in the literature, e.g., two parameter families with one and three parameters respectively were found using a heuristic ALS method  \cite{params_R23_1986}, and up to 17 parameters were introduced in discrete PDs obtained using an SAT solver \cite{new_ways_33_2021}.
We extend these parametrizations with a two-parameter family of a non-discretizable PD which is, up to our knowledge, the first parametrization of such a PD. These PDs and corresponding parametrizations might not be useful in practical FMM algorithms but they might help us understand the set of all (C)PDs of $T_{mpn}$. Furthermore, we are, up to our knowledge, the first to propose parameter families for other combinations of $m,p$, and $n$, such as 
$(4,4,4)$, $(3,4,5)$ and permutations. Using these parametrizations, the stability of FMM algorithms can be investigated and improved using a simple numerical optimization routine in {\Matlab}. We round the obtained numerical parameters to the nearest power of two to obtain practical PDs.

Because of its definition, $T_{mpn}$ is a structured and more specifically cyclic symmetric (CS) tensor when $m:=p:=n$. This means that $T_{mmm}(i,j,k) = T_{mmm}(j,k,i)=T_{mmm}(k,i,j)$, for all $i,j,k=1\dots m^2$. As discussed in \Cref{sec:CS}, this structure can be used in the optimization problem to reduce the number of unknowns. We denote by $S$ the number of symmetric rank-1 tensors in a CS PD and by $T$ the number of CS pairs of asymmetric rank-1 tensors. The rank then equals $R=S+3R$. Different CS, sparse, and discrete PDs of rank 7 for $T_{222}$ and of rank 23 for $T_{333}$ are known in the literature \cite{Chiantini2019,Ballard2019}. These PDs were found using an ALS method and further investigated using algebraic geometry and group theory. However, no parametrizations for these PDs were given. Additionally, no CS PDs for $T_{444}$ are known to us, except for the PD obtained when using $\mathrm{PD}_{\mathrm{Strassen}}$, which satisfies  $S:=1$ and $T:=2$, recursively. This recursive PD is denoted by $\mathrm{PD}_{\mathrm{Strassen}}^{\mathrm{rec}}$ and can be shown to satisfy $S:=1$ and $T:=16$. Another CS PD with $S:=4$ and $T:=1$ exists for $T_{222}$ but this PD contains more nonzeros than $\mathrm{PD}_{\mathrm{Strassen}}$ and thus is not useful in practice \cite{Chiantini2019}. However, when this PD is used recursively, we obtain another CS $\mathrm{PD}_{49} \left(T_{444} \right)$ with $S:=16$ and $T:=11$. 
In \Cref{sec:param_and_stab_improv}, we give a five-parameter familiy of CS $\mathrm{PD}_{49} \left(T_{444} \right)$s for $S:=13$ and $T:=12$, obtained with the method described in \Cref{sec:ALM}.  

This paper is organised as follows. In \Cref{sec:prelim}, the preliminaries for the constrained optimization problem and the proposed numerical optimization algorithm are given. The AL optimization method is then given in \Cref{sec:ALM}. Lastly in \Cref{sec:results}, the different parametrizations we discovered are shown, together with the values of the parameters for which an improvement of the stability is attained. 

\section{Preliminaries} \label{sec:prelim}

In this preliminary section, we first discuss the well known invariance transformations that hold for all PDs of $T_{mpn}$ and we give a brief overview of what is known about the use of these invariance transformations to obtain sparse and discrete PDs of $T_{mpn}$. Afterwards, BRPDs and the definition of the stability of FMM algorithms are briefly discussed. Then, we give a first result about the link between the rank of the Jacobian matrix of \eqref{eq: unconstr_LSQ_min} and the invariance transformations. Lastly, we discuss the LM method and the quadratic penalty method because they form the basis for the AL method that is proposed in the next section.

\subsection{Invariance Transformations} \label{sec:inv_transf}
The representation of a PD via factor matrices is not unique. For example when permuting the rank-1 tensors, and consequently the columns of the factor matrices according to
\begin{equation*}
\lbrace U,V,W \rbrace \rightarrow \lbrace U',V',W' \rbrace,
\end{equation*}
where $ u'_r= u_{\sigma(r)}$, for all $r=1\dots R$, and $\sigma = \left( \begin{matrix}
\sigma(1) & \sigma(2) & \sigma(3) & \dots & \sigma(R)
\end{matrix} \right)$ is an element of the permutation group of $R$ elements, then the same tensor is obtained:
\begin{equation*}
\sum_{r=1}^R u_r \otimes v_r\otimes w_r = \sum_{r=1}^R u_r' \otimes v_r'\otimes w_r'.
\end{equation*}
Additionally, because of the multilinearity of the tensor product, the columns of each rank-1 tensor can be scaled as follows: $\alpha_r u_r, \beta_r v_r, \frac{1}{\alpha_r \beta_r} w_r$, for all $r=1 \dots R$, and $\alpha_r,\beta_r$ in $\mathbb{R}_0$, without changing the rank-1 tensors because
\begin{equation*}
\begin{split}
\alpha_r u_r \otimes  \beta v_r  \otimes \frac{1}{\alpha_r \beta_r} w_r = u_r \otimes  v_r  \otimes w_r. \\
\end{split}
\end{equation*}
These two invariances hold for the PDs of all tensors. Remark that only the scaling invariance is a continuous invariance transformation (of dimension $2R$).

For PDs of $T_{mpn}$, additional invariances hold \cite{DeGroote1978a}. More specifically, it is well known that these PDs are invariant under the following \emph{PQR-transformation}:
\begin{equation} \label{eq:mult_inv}
\begin{split}
u_r' \leftarrow \text{vec} \left( P U_r Q^{-1} \right), \quad v_r' \leftarrow \text{vec} \left( Q V_r R^{-1} \right),  \quad w_r' \leftarrow \text{vec} \left( R W_r P^{-1} \right), \\
\end{split}
\end{equation}
for $r=1 \dots R$ and where $P \in GL(m)$, $Q, \in GL(p)$, $R \in GL(n)$, where $GL(i)$ is the \emph{general linear group} of invertible matrices in $\mathbb{R}^{i \times i}$. This invariance can easily be proven using the fact that $C = P^{-1} \left( PAQ^{-1} \right) \left( QBR^{-1} \right) R$. Additionally, when $m:=n:=p$, PDs of $T_{mmm}$ are invariant under the following \emph{transpose-transformation}:
\begin{equation*}
\begin{split}
u_r' \leftarrow \text{vec} \left( V_r^\top \right) , \quad v_r' \leftarrow \text{vec} \left( U_r^\top \right) , \quad w_r' \leftarrow \text{vec} \left( W_r^\top \right),\\
\end{split}
\end{equation*}
for $r=1 \dots R$, which can be proven using the fact that $C = \left( B^\top A^\top \right)^\top$. Lastly, still for $m:=n:=p$, the PDs are invariant under cyclic permutation of the factor matrices:
\begin{equation*}
\begin{split}
\sum_{r=1}^R u_r \otimes v_r\otimes w_r = \sum_{r=1}^R v_r \otimes w_r\otimes u_r = \sum_{r=1}^R w_r \otimes u_r\otimes v_r,
\end{split}
\end{equation*}
which holds because $T_{mmm}$ is a CS tensor.

Remark that for PDs of $T_{mpn}$, only the PQR-invariance is a continuous invariance and it overlaps with the scaling invariance when $P=aI$, $Q=bI$, and $R=cI$, where $I$ is the identity matrix of the appropriate size and $a,b$, and $c$ are scaling factors in $\mathbb{R}_0$, because in this case the PQR-transformation scales the columns of the factor matrix $U$ with $\frac{a}{b}$, all columns of $V$ with $\frac{b}{c}$, and all columns of $W$ with $\frac{c}{a}$, and indeed the product of these factors equals one as for the scaling invariance. Thus, the combination of the scaling and PQR-transformation has at most dimension $2R + m^2 + p^2 + n^2 - 3$ for all PDs of $T_{mpn}$.

We call the combination of the invariance transformations described above \emph{inv-tranformations}. Two PDs of $T_{mpn}$ that can be obtained using inv-transformations are called \emph{inv-equivalent}. Only for $T_{222}$, it is known that all PDs are inv-equivalent \cite{DeGroote1978b}.

\subsubsection{Discretization using Invariance Transformations} \label{sec:discret_inv_trans}
We have motivated in the introduction that to obtain a practical FMM algorithm, sparse and discrete PDs are required. If we obtain a non-discrete PD using numerical optimization, we can attempt to obtain a discrete PD with the inv-transformations \cite{Tichavsky2017}. This means that the numerical PD has to be inv-equivalent to a discrete PD. However, Guillaume Berger discovered, based on experiments performed with the method of Tichavsk\'y \cite{Tichavsky2017}, that two numerical PDs are in general not inv-equivalent for most combinations of $m$, $p$, and $n$, except, e.g., for $T_{222}$, for which all PDs are inv-equivalent to $\mathrm{PD}_{\mathrm{Strassen}}$ and consequently discretizable \cite{Strassen1969,DeGroote1978b}. He additionally investigated necessary conditions for a PD to be discretizable in this way \cite{berger2022equivalent}. For example, the trace of $U_r V_r W_r$ has to be discrete, for all $r=1 \dots R$, because the inv-transformations are trace invariant. Similarly, the (sorted) ranks of $U_rV_rW_r$ are invariant under these transformations and thus these ranks can be used as another necessary condition for the inv-equivalence of PDs.

\subsection{Cyclic Symmetry} \label{sec:CS}
As discussed already in the introduction, $T_{mpn}$ is a CS tensor when $m:=p:=n$. We can make use of this property for constructing a PD by requiring that the rank-1 tensors are symmetric or occur in CS pairs:
\begin{equation*}
T_{mmm} = \sum_{s=1}^S a_s \otimes a_s \otimes a_s +\sum_{t=1}^T b_t \otimes d_t \otimes c_t +\sum_{t=1}^T c_t \otimes b_t \otimes d_t + \sum_{t=1}^T d_t \otimes c_t \otimes b_t,
\end{equation*}
where $a_s,b_t,d_t$, and $c_t$ are vectors in $\mathbb{R}^{m^2}$, for all $s =1\dots S$ and $t=1\dots T$. The parameters $S$ and $T$ determine respectively the number of symmetric and CS pairs of asymmetric rank-1 tensors. The rank then equals $R = S + 3T$. If we define the matrices
$A := \left[ a_{1} \cdots a_{S} \right]$, 
$B := \left[ b_{1} \cdots b_{T} \right]$, $C := \left[ c_{1} \cdots c_{T} \right]$, and $D := \left[ d_{1} \cdots d_{T} \right]$,
the factor matrices can be written in function of these matrices:
\begin{align*}
U& = \begin{bmatrix} A & B &  C & D \end{bmatrix}, && V= \begin{bmatrix} A & D & B & C \end{bmatrix}, && W = \begin{bmatrix} A & C & D & B \end{bmatrix}.
\end{align*}
Consequently the number of unknowns in \eqref{eq: unconstr_LSQ_min} is reduced by a factor 3, which makes this CS structure very useful for larger problem parameters, e.g., $m:=p:=n:=4$. A disadvantage may be that by restricting the search space we might not be able to find the most sparse or stable PDs of $T_{mmm}$.

\subsection{Border Rank Decompositions}

An approximate or BRPD of $T_{mpn}$ of rank $R' \geq R_b$, where $R_b$ is the border rank, can be written in function of a parameter $\varepsilon<1$ as \cite{schonhagen_partial_total_MM_1981}:
\begin{equation} \label{eq:def_approx_decomp}
\frac{1}{\varepsilon^h} \left( \sum_{r=1}^{R'}\left( \sum_{i=0}^{h} \varepsilon^i u_{i,r} \right) \otimes \left( \sum_{i=0}^{h} \varepsilon^i v_{i,r} \right) \otimes \left( \sum_{i=0}^{h} \varepsilon^i w_{i,r} \right)  \right) = T_{mpn} +  \sum_{i=1}^{d} \varepsilon^i E_i,
\end{equation}
where $u_{i,r} \in \mathbb{R}^{mp}$, $v_{i,r} \in \mathbb{R}^{np}$, $w_{i,r} \in \mathbb{R}^{mn}$, and $E_{i} \in \mathbb{R}^{mp \times pn \times mn}$. The parameter $h$ is called the order of the approximate PD and $d$ the error degree. As can be seen from \eqref{eq:def_approx_decomp}, the error will go to zero when $\varepsilon$ goes to zero. However, in this case the norm of the approximate PD will go to infinity, because of the term $1/\varepsilon^h$. Furthermore, all elements of the tensor on the right hand side have norm $\mathcal{O}(1)$ when $\varepsilon \rightarrow 0$, and thus the large elements on the left hand side have to annihilate, which makes such PDs not numerically stable and thus not useful in practical FMM algorithms.

\subsection{Stability} \label{sec:stab}

To investigate the stability of practical FMM algorithms, the following upper bound on the infinity norm between $\hat{C}$, generated when using a basis algorithm to multiply matrices in $\mathbb{R}^{m \times p}$ and $\mathbb{R}^{p \times n}$ $k$ times recursively, and $C=AB$, where $A \in \mathbb{R}^{m^k \times p^k}$ and $B \in \mathbb{R}^{p^k \times n^k}$, is used \cite[Theorem 3]{Ballard2016_stab}:
\begin{equation*}
\lVert \hat{C}-C \rVert_{\infty} \leq \left(1 + qk \right)e^k \lVert A \rVert_{\infty} \lVert B \rVert_{\infty} \epsilon + \mathcal{O} \left( \epsilon^2 \right),
\end{equation*}
where $\epsilon$ is the machine precision, and $q$ and $e$ are the pre- and stability factor of the basis algorithm respectively and are defined as follows
\begin{equation} \label{eq:e_q}
\begin{split}
q &:= \max_i \left( \lVert W(i,:) \rVert_0  + \max_r \Big(\left( \lVert  u_r \rVert_0  + \lVert  v_r \rVert_0 \right) \lVert  w_r(i) \rVert_0 \Big)\right), \\
e &:= \max_i \sum_{r=1}^R \left\lVert u_r \right\rVert_1  \lVert v_r \rVert_1  \lvert w_r(i) \rvert, \quad i= 1 \dots mn.
\end{split}  
\end{equation}
As can be seen from their definition, $e$ and $q$ are highly non-convex because of the ${\ell}_1$- and $\ell_0$-norm. Since the $\ell_0$-norm determines directly the number of nonzeros, the most sparse PDs in general also have the lowest prefactor. To improve the stability, we search for sparse parameter families of PDs and minimize the stability factor within these families. This is in general not computationally expensive because the number of parameters is significantly lower than the number of elements in the factor matrices.

Remark that $e$ and $q$ are not invariant under permutation of the factor matrices, and thus the most stable PD for $T_{mpn}$ is in general not the most stable PD for $T_{pnm}$ or any other permutation of $m$, $p$, and $n$.

\subsection{Jacobian Matrix} \label{sec:Jacobian}
The Jacobian matrix $J(x)$ of $F(x)$ is the $(mpn)^2 \times R(mp+np+mn)$ matrix containing the first order partial derivatives of $F(x)$, which can be written as
\begin{equation*}
\begin{split}
J(x) = &\left[ \begin{matrix}
w_1 \otimes v_1 \otimes I_{mp} && w_2 \otimes v_2 \otimes I_{mp} &&\cdots &&  w_r \otimes v_r \otimes I_{mp} \end{matrix} \right. \\
& \left. \begin{matrix} w_1 \otimes I_{pn} \otimes u_1 && w_2 \otimes I_{pn} \otimes u_2 && \cdots &&  w_r \otimes I_{pn} \otimes u_r \end{matrix} \right. \\
 & \left. \begin{matrix} I_{mn} \otimes v_1 \otimes u_1 &&I_{mn} \otimes v_2 \otimes u_2 && \dots && I_{mn} \otimes v_r \otimes u_r
\end{matrix} \right],
\end{split}
\end{equation*}
where $I_{mp}$ is the identity matrix of size $mp$.
The gradient of $f(x)$ is then $\nabla f(x) = J(x)^\top F(x)$. The Jacobian matrix plays an important role in the numerical optimization algorithm described in \Cref{sec:ALM} but also in discovering new parametrizations as discussed further in \Cref{sec:results}. 

We noticed that for a certain combination of $m$, $p$, and $n$, the rank of $J(x^*)$ differs for different solutions $x^*$ of \eqref{eq: unconstr_LSQ_min}. If we assume that $F^{-1}(T_{mpn})$ is locally a smooth manifold at a solution $x^*$ of \eqref{eq: unconstr_LSQ_min}, then the dimension of the tangent space is equal to the rank of $J(x^*)$. Similarly, $\min \left(\mathrm{size}\left(J\left(x^*\right)\right)\right)-\rank \left(J\left(x^*\right) \right)$, where $\mathrm{size} (\cdot)$ returns a tuple containing the dimensions of a matrix, is the dimension of the solution space at $x^*$.

It is easy to see that the permutation-, scaling- and transpose-transformations of the previous section do not change the rank of $J(x)$ as these transformations only scale or permute the columns or rows of $J(x)$. The following lemma states that this also holds for the PQR-transformation \eqref{eq:mult_inv}.

\begin{lemma}[Invariance rank $J(x)$]
Let $x^*$ be such that $f(x^*)=0$ and $x^* = \begin{bmatrix} \vec2(U)^\top & \vec2(V)^\top & \vec2(W)^\top \end{bmatrix}^\top$. If $x'$ is obtained using the PQR-transformation on $U$, $V$, and $W$, then:
\begin{equation*}
\rank \left(J(x^*) \right) = \rank \left( J(x') \right).
\end{equation*} 
\end{lemma}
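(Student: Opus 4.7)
The plan is to exhibit the PQR-transformation simultaneously as an invertible change of variables on $x$ and as an invertible linear action on the ambient tensor space, then use the chain rule to derive a relation $J(x') = M\,J(x^*)\,T^{-1}$ with both $M$ and $T$ invertible. Since rank is preserved under multiplication by invertible matrices from either side, this immediately gives $\rank(J(x^*)) = \rank(J(x'))$.

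First, using the identity $\vec2(AXB) = (B^{\top}\otimes A)\,\vec2(X)$, I would rewrite \eqref{eq:mult_inv} column-wise as $u_r' = M_U u_r$, $v_r' = M_V v_r$, $w_r' = M_W w_r$, where $M_U := Q^{-\top}\otimes P$, $M_V := R^{-\top}\otimes Q$, and $M_W := P^{-\top}\otimes R$; each is invertible as a Kronecker product of invertible matrices. Consequently, the block-diagonal matrix $T := \mathrm{diag}(I_R\otimes M_U,\; I_R\otimes M_V,\; I_R\otimes M_W)$ realizing $x' = T x^*$ is invertible. Multilinearity of the tensor product then yields, for every $x$,
\begin{equation*}
F(Tx) \;=\; \vec2\!\left( \sum_{r=1}^R (M_U u_r) \otimes (M_V v_r) \otimes (M_W w_r) \right) \;=\; M\, F(x),
\end{equation*}
where $M$ is the Kronecker product of $M_U$, $M_V$, $M_W$ (in the order prescribed by the vectorization convention) acting on the $(mpn)^2$-dimensional tensor space; its invertibility is immediate from that of its three factors.

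Finally, differentiating the identity $F(Tx) = M\,F(x)$ in $x$ and applying the chain rule gives $J(Tx)\,T = M\,J(x)$, valid at every $x$. Specializing to $x = x^*$ and using $x' = T x^*$ yields $J(x') = M\,J(x^*)\,T^{-1}$, and the rank equality follows. I expect the only fiddly step to be the vectorization bookkeeping that lets one recognize $M$ as a bona fide Kronecker product (and hence manifestly invertible); no deeper obstacle presents itself. As a side remark, the argument nowhere uses $f(x^*)=0$, so the stated rank invariance in fact holds at every $x$, not merely at solutions of \eqref{eq: unconstr_LSQ_min}.
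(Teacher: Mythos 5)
Your proposal is correct and follows essentially the same route as the paper: both establish the factorization $J(x') = M\,J(x^*)\,T^{-1}$ with $M$ the Kronecker product of the three invertible blocks $Q^{-\top}\otimes P$, $R^{-\top}\otimes Q$, $P^{-\top}\otimes R$ and $T^{-1}$ the corresponding block-diagonal matrix, the only difference being that you obtain it by differentiating the identity $F(Tx)=MF(x)$ while the paper writes out the columns of $J(x')$ explicitly. Your side remark that the hypothesis $f(x^*)=0$ is never used is also accurate.
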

\begin{proof}
The PQR-transformation changes the columns of the factor matrices according to:
\begin{align*} 
u_r' \leftarrow \left( \left( Q^{-1} \right)^\top \otimes P \right) u_r, && v_r' \leftarrow \left( \left( R^{-1} \right)^\top \otimes Q \right) v_r,  && w_r' \leftarrow \left( \left(P^{-1} \right)^\top \otimes R \right) w_r.
\end{align*}
Thus the Jacobian at $x' = \begin{bmatrix} \vec2(U')^\top & \vec2(V')^\top & \vec2(W')^\top \end{bmatrix}^\top$ equals: 
\begin{gather*} 
J\left( x' \right) = 
 \left[ \begin{matrix} \left( \left(P^{-1} \right)^\top \otimes R \right) w_1 \otimes \left( \left( R^{-1} \right)^\top \otimes Q \right) v_1 \otimes I_{mp}  & \cdots & \left( \left(P^{-1} \right)^\top \otimes R \right) w_r \otimes \end{matrix} \right. \\
\left. \begin{matrix}  \left( \left( R^{-1} \right)^\top \otimes Q \right) v_r \otimes I_{mp} &~~ \left( \left(P^{-1} \right)^\top \otimes R \right) w_1 \otimes I_{pn} \otimes \left( \left( Q^{-1} \right)^\top \otimes P \right) u_1  & \cdots  \end{matrix} \right. \\
 \left. \begin{matrix} \left( \left(P^{-1} \right)^\top \otimes R \right) w_r \otimes I_{pn} \otimes \left( \left( Q^{-1} \right)^\top \otimes P \right) u_r &~~ I_{mn} \otimes \left( \left( R^{-1} \right)^\top \otimes Q \right) v_1  \end{matrix} \right.  \\
 \left. \begin{matrix}   \otimes \left( \left( Q^{-1} \right)^\top \otimes P \right) u_1 & \cdots & I_{mn} \otimes \left( \left( R^{-1} \right)^\top \otimes Q \right) v_r \otimes \left( \left( Q^{-1} \right)^\top \otimes P \right) u_r \end{matrix} \right] \\
=  \left( \left(P^{-1} \right)^\top \otimes R \right) \otimes \left( \left( R^{-1} \right)^\top \otimes Q \right) \otimes\left( \left( Q^{-1} \right)^\top \otimes P \right) \left[ \begin{matrix}  w_1 \otimes  v_1 \otimes \left( Q^\top \otimes P^{-1} \right) \end{matrix} \right. \\
 \begin{matrix}  &\cdots & w_r \otimes  v_r \otimes \left( Q^\top \otimes P^{-1} \right) &~  w_1 \otimes \left( R^\top \otimes Q^{-1} \right)   \otimes   u_1 & \cdots &   \end{matrix}\\
 \left. \begin{matrix} w_r \otimes \left( R^\top \otimes Q^{-1} \right)  \otimes  u_r &~  \left( P^\top \otimes R^{-1} \right)  \otimes v_1 \otimes  u_1 & \cdots & \left( P^\top \otimes R^{-1} \right)  \otimes  v_r \otimes u_r \end{matrix} \right] \\
=  \left( \left(P^{-1} \right)^\top \otimes R \right) \otimes \left( \left( R^{-1} \right)^\top \otimes Q \right) \otimes \left( \left( Q^{-1} \right)^\top \otimes P \right) \\
\left[ \begin{matrix}  w_1 \otimes  v_1 \otimes I_{mp} &\cdots & w_r \otimes  v_r \otimes I_{mp} & w_1 \otimes I_{pn} \otimes u_1 & \cdots   \end{matrix} \right. \\
 \left. \begin{matrix} w_r \otimes I_{pn} \otimes  u_r & I_{mn} \otimes v_1 \otimes  u_1 & \dots & I_{mn} \otimes  v_r \otimes u_r \end{matrix} \right] \text{diag} \left( \begin{matrix} Q^\top \otimes P^{-1} \end{matrix} \right. \\
\left. \begin{matrix} \cdots & Q^\top \otimes P^{-1} & R^\top \otimes Q^{-1}  \cdots & R^\top \otimes Q^{-1} &  P^\top \otimes R^{-1} & \cdots & P^\top \otimes R^{-1}  \end{matrix} \right)\\
=  \left( \left(P^{-1} \right)^\top \otimes R \right) \otimes \left( \left( R^{-1} \right)^\top \otimes Q \right) \otimes \left( \left( Q^{-1} \right)^\top \otimes P \right) J(x) \text{diag} \left( \begin{matrix} Q^\top \otimes P^{-1} \end{matrix}  \right.   \\
\left. \begin{matrix} \cdots & Q^\top \otimes P^{-1} & R^\top \otimes Q^{-1}  \cdots & R^\top \otimes Q^{-1} & P^\top \otimes R^{-1} & \cdots & P^\top \otimes R^{-1}  \end{matrix} \right).
\end{gather*}
Since both $\left( \left(P^{-1} \right)^\top \otimes R \right) \otimes \left( \left( R^{-1} \right)^\top \otimes Q \right) \otimes \left( \left( Q^{-1} \right)^\top \otimes P \right)$ and the block diagonal matrix on the right are full-rank square matrices, the rank of $J(x')$ equals the rank of $J(x)$
\end{proof}
Consequently, the rank of $J(x^*)$ can be used as another necessary condition for the inv-equivalence of PDs of $T_{mpn}$.

\subsection{Levenberg-Marquardt Method} \label{sec:LM}
The LM method, which is a damped Gauss-Newton method, is an iterative optimization method which calculates the next iterate as $x_{k+1} := x_k + \Delta x$, where $\Delta x$ is the solution of the linear system:
\begin{equation*}
(\tilde{H}(x_k)+ \mu I) \Delta x = - \nabla f(x_k),
\end{equation*}
where $\mu \geq 0 $ is a damping parameter and $\tilde{H}(x_k) := J(x_k)^\top J(x_k) $ is the Gauss-Newton approximation of the Hessian matrix of $f(x)$. The damping parameter is dynamically updated. The pseudocode of the algorithm is shown in \Cref{alg:LM} \cite{madsen2004}. 

\begin{algorithm}
\caption{LM Method for the Minimization of a Real Function $f(x)$}
\label{alg:LM}
\begin{algorithmic}[1]
\Require $x_0$, gradient tolerance $\omega^*$, step tolerance $\varepsilon$, $\mu_0$
\State Initialize $k \leftarrow 0, \nu \leftarrow 2$, $ \mu \leftarrow \mu_0 $
\While {$k< k_{\max}$}
\If {$\left\lVert \nabla f(x_k) \right\rVert < \omega^*$}
\Return $x^* \leftarrow x_{k+1}$
\EndIf
\State Solve $\left( \tilde{H}(x_k) + \mu I \right) \Delta x = -\nabla f(x_k)$
\State $x_{k+1} \leftarrow x_k + \Delta x$
\If {$\lVert \Delta x \rVert < \varepsilon (\lVert x\rVert + \varepsilon)$}
\Return $x^* \leftarrow x_{k+1}$
\EndIf
\State $\rho \leftarrow 2\left( f(x_k) -  f(x_{k+1}) \right) / \left( \Delta x^\top \left( \mu \Delta x - \nabla f(x_k) \right) \right)$
\If {$\rho > 0$}
\State $ k \leftarrow k+1$
\State $ \mu \leftarrow \mu \max\left( 1/3, 1-(2\rho-1)^3\right)$
\State $ \nu \leftarrow 2 $
\Else
\State $\mu \leftarrow \nu \mu$
\State $\nu \leftarrow 2\nu$
\EndIf
\EndWhile 
\Ensure $x_{k}$
\end{algorithmic}
\end{algorithm}

\subsection{Quadratic Penalty Method} \label{sec:quadratic_penalty}
The QP method can be used to solve a constrained optimization problem: 
\begin{equation*} 
\begin{split}
\min_x f(x), \quad \mathrm{s.t.} \quad h(x)=0,
\end{split}
\end{equation*}
where $h(x)$ is a vector function. The method adds another least-squares term to the cost function in \eqref{eq: unconstr_LSQ_min}: $f_{\beta} (x) := f(x)  + \beta \lVert h(x) \rVert^2$, 
where $\beta$ is a regularization parameter to control the relative importance of minimizing $f(x)$ versus satisfying the constraint. 

\section{Augmented Lagrangian Method}\label{sec:ALM}
The AL method is used to solve optimization problems with equality and/or inequality constraints \cite{Nocedal1999}:
\begin{equation} \label{eq:constr_min}
\begin{split}
\min_x f(x), \quad \mathrm{s.t.} \quad h(x)=0, \quad g(x) \leq 0.
\end{split}
\end{equation}
More specifically we use as inequality constraint: $l \leq x \leq u$, which bounds the factor matrix elements to the interval $[l,u]$. 
The AL method is a combination of the Lagrange multipliers method and the QP method. The Augmented Lagrangian (AL) objective function is 
\begin{equation*} 
\begin{split}
\min_{x,y_1,y_2,z}  \underbrace{f(x) + \left\langle y_1,h(x) \right\rangle + \langle y_2, x - z \rangle + \frac{\beta}{2} \left( \lVert h(x) \rVert^2 + \lVert x- z\rVert^2 \right)}_{=:\mathcal{L}_A \left( x,y_1, y_2,z;\beta \right)}, \quad \mathrm{s.t.} \quad l \leq z \leq u,
\end{split}
\end{equation*}
where $y_1$ and $y_2$ are vectors of Lagrange multipliers, $\beta$ is the regularization parameter, and $z$ is a vector of slack variables that enables us to rewrite the inequality constraint as an equality constraint. The objective $\mathcal{L}_A \left( x,y_1, y_2,z;\beta \right)$ can be rewritten as a least-squares function:
\begin{equation*}
\begin{split}
\mathcal{L}_A \left( x,y_1, y_2,z;\beta \right) = f(x) + \frac{\beta}{2} \left\lVert h(x) +  \frac{y_1}{\beta} \right\rVert^2 + \frac{\beta}{2} \left\lVert x -z +  \frac{y_2}{\beta} \right\rVert^2 - \frac{1}{2 \beta} \lVert y \rVert^2
\end{split}
\end{equation*}
s.t. $l \leq z \leq u$, where $y:=[y_1;y_2]$. In every iteration of the AL method we search for the optimal parameters $x$ and $z$ given $y$ and $\beta$:
\begin{equation} \label{eq:min_ALM_objective_LSQ}
\begin{split}
\min_{\substack{x \\ l \leq z \leq u}} \mathcal{L}_A \left( x,z ; y,\beta \right) &= \min_{x} \min_{l \leq z \leq u}  \left( \frac{2}{\beta} f(x) +  \left\lVert h(x) +  \frac{y_1}{\beta} \right\rVert^2 +  \left\lVert x -z +  \frac{y_2}{\beta} \right\rVert^2 \right) \\
&= \min_{x} \left(  \frac{2}{\beta} f(x) +  \left\lVert h(x) +  \frac{y_1}{\beta} \right\rVert^2 + \min_{l \leq z \leq u}  \left\lVert x -z +  \frac{y_2}{\beta} \right\rVert^2 \right).
\end{split}
\end{equation}
The optimal parameters $z^*$ in function of $x$ can easily be found:
\begin{equation} \label{eq:z_optimal}
z^* = \begin{cases}
l& \text{ when } x + \frac{y_2}{\beta} < l, \\
x +  \frac{y_2}{\beta} & \text{ when } l \leq x + \frac{y_2}{\beta}  \leq u, \\
u & \text{ when } u < x + \frac{y_2}{\beta},  \\
\end{cases}
\end{equation}
and thus
\begin{equation*} 
\min_{l \leq z \leq u} \left\lVert x - z + \frac{y_2}{\beta} \right\rVert^2 = \begin{cases}
\left\lVert x - l + \frac{y_2}{\beta} \right\rVert^2 & \text{ when } x + \frac{y_2}{\beta} < l, \\
0 & \text{ when } l \leq x + \frac{y_2}{\beta}  \leq u, \\
\left\lVert x - u+ \frac{y_2}{\beta} \right\rVert^2 & \text{ when } u < x + \frac{y_2}{\beta},  \\
\end{cases}
\end{equation*}
which can be written more compactly as $\left\lVert \left[ l - x - \frac{y_2}{\beta} \right]_+ + \left[ x+ \frac{y_2}{\beta} -u \right]_+ \right\rVert^2$, where $\left[ w \right]_+ = \max\lbrace w,0 \rbrace$. This function is taken element wise. The update of the Lagrange multipliers is \cite{Nocedal1999}: $y_{k+1} = y_{k} + \beta \begin{bmatrix}
h(x_k) &~ x_k - z_k \end{bmatrix}^\top$.
Using \eqref{eq:z_optimal} we can rewrite the update of $y_{k+1,2}$ as:
\begin{equation*} 
\begin{split}
y_{k+1,2} =  \beta \left( \frac{y_{k,2}}{\beta} + x_k - z_k \right)  &= \beta \left( \left[\frac{y_{k,2}}{\beta} + x_k - u) \right]_+ - \left[ l - \frac{y_{k,2}}{\beta} - x_k \right]_+ \right)\\
&=  \left[ y_{k,2} + \beta ( x_k - u) )\right]_+ -  \left[ \beta ( l - x_k) - y_{k,2}  \right]_+.
\end{split}
\end{equation*}
For the minimization to $x$ we use the Levenberg-Marquardt method and rewrite \eqref{eq:min_ALM_objective_LSQ} as 
\begin{equation} \label{eq:AL_noz}
\begin{split}
\min_{x} \left( f(x) + \frac{\beta}{2} \left\lVert h(x) +  \frac{y_1}{\beta} \right\rVert^2 + \frac{\beta}{2} \left\lVert \left[ l - x - \frac{y_2}{\beta} \right]_+ +  \left[ x+ \frac{y_2}{\beta} -u \right]_+ \right\rVert^2 \right). \\
\end{split}
\end{equation}
This objective function can be considered as the objective function of the QP method for the minimization of $f(x)$ with constraint
\begin{equation*} 
\begin{split}
h_\beta(x;y) &:=  \begin{bmatrix}
h(x) +  \frac{y_1}{\beta} \\
\left[ l - x - \frac{y_2}{\beta} \right]_+ +
\left[ x+ \frac{y_2}{\beta} -u \right]_+ 
\end{bmatrix},
\end{split}
\end{equation*}
and regularization parameter $\frac{\beta}{2}$. We can rewrite the objective function in \eqref{eq:AL_noz} more compactly as
\begin{equation} \label{eq:obj_LM_ALM}
\begin{split}
&\min_{x}   \frac{\beta}{2} \left\lVert  \underbrace{\begin{bmatrix}
\frac{1}{\sqrt{\beta}}\left( F(x) - \mathrm{vec} \left( T_{mpn} \right)\right) \\
h_\beta(x;y)
\end{bmatrix}}_{=:F_{\beta}(x;y)} \right\rVert^2.
\end{split}
\end{equation}

The pseudocode of the AL method is shown in \Cref{alg:ALM} \cite[Algorithm 17.4]{Nocedal1999}. 
After the initialization, the algorithm searches an approximate minimizer $x_{k}$ of \eqref{eq:obj_LM_ALM} on line 3 using the LM method described in \Cref{sec:LM}. We run the LM method for a fixed number of iterations $k_{\max}$, e.g., 20-50 or until the norm of the gradient is smaller than $\omega_{k-1}$, which is updated dynamically. 

The gradient of \eqref{eq:obj_LM_ALM} is
\begin{equation*}
\nabla_x F_{\beta} \left( x;y \right)= \frac{1}{{\beta}} \nabla f(x) +  J_h(x)^\top \left(h(x) + \frac{y_1}{\beta} \right) - \left[ l - x - \frac{y_2}{\beta} \right]_+ + \left[ x+ \frac{y_2}{\beta} -u \right]_+,
\end{equation*}
where $J_h(x)$ is the Jacobian matrix of $h(x)$. For the LM method, we also need the Gauss-Newton Hessian approximation, which is 
\begin{equation*}
\begin{split}
\tilde{H}_{\beta}(x;y) 
&=  \frac{1}{{\beta}} J_{F} (x) ^\top J_{F}(x) + J_{h}(x)^\top J_{h}(x) + I_g, \\
\end{split}
\end{equation*} 
where 
\begin{equation*}
I_g(i,j) := \begin{cases} 
1 & \text{if } i = j \text{ and } x(i)+ \frac{y_2(i)}{\beta} < l, \\
1 & \text{else if } i = j \text{ and } u < x(i)+ \frac{y_2(i)}{\beta}, \\
0 & \text{else}. \\ \end{cases}
\end{equation*} 

After computing $x_k$, the update of the other parameters depends on the feasibility of $x_k$. 
A good value of the auxiliary parameters together with a convergence analysis is given in \cite{Conn1991}. More specifically they suggest $\beta_{0}:=10$, $\alpha_\omega := \beta_\omega := \bar{\eta} := \bar{\omega} := 1$, $\alpha_\eta := \bar{\gamma} := 0.1$,  $\beta_{\eta} := 0.9$, and $\tau := 0.01$ for well-scaled problems. We suggest to higher the parameter $\tau$ to, e.g., 0.5 because otherwise the method converges easily to local minima. 

\begin{algorithm}
\caption{AL Method for the Minimization of \eqref{eq:constr_min}}\label{alg:ALM}
\begin{algorithmic}[1]
\Require $x_0,y_0, \beta_0 ~(\geq 1), \alpha_\omega, \beta_\omega,  \bar{\eta} ~(\leq 1), \bar{\omega}  ~(\leq 1), \alpha_{\eta} ~(< \min \left(1,\alpha_{\omega}\right)), \bar{\gamma} ~(< 1), \beta_{\eta} ~(< \min \left(1,\beta_{\omega}\right)), \tau ~(> 1)$, $\omega^*, \eta^* (< 1)$
\State Initialize tolerances:
$\eta_0 \leftarrow \bar{\eta} \left( \min \left(\beta_0^{-1},\bar{\gamma} \right) \right)^{\alpha_{\eta}},~   \omega_0 \leftarrow \bar{\omega} \min(\beta_0^{-1}, \bar{\gamma})^{\alpha_{\omega}}$
\For {$k=1 \dots k_{\max}$} 
\State Find $x_{k}$ such that $\left\lVert  \nabla_x F_{\beta_{k-1}} (x_{k};y_{k-1}) \right\rVert < \omega_{k-1}$, using \Cref{alg:LM}.
\If {$\lVert h_{\beta_{k-1}} (x_k;y_{k-1}) \rVert < \eta_{k-1}$}
\If {$ \lVert h_{\beta_{k-1}} (x_k;y_{k-1})  \rVert < \eta^* \And \left\lVert  \nabla_x F_{\beta_{k-1}} (x_{k};y_{k-1}) \right\rVert < \omega^*$}
\State \textbf{Output:} $x^* \leftarrow x_k$
\EndIf
\State $y_{k,1} \leftarrow y_{k-1,1} + \beta_{k-1} h(x_k)$
\State $y_{k,2} \leftarrow \left[ y_{k-1,2} + \beta_{k-1} ( x_k - u) )\right]_+ -  \left[ \beta_{k-1} ( l - x_k) - y_{k-1,2}  \right]_+$
\State $\beta_{k} \leftarrow \beta_{k-1}$
\State $\eta_{k} \leftarrow \eta_{k-1} \beta_{k}^{-\beta_{\eta}}  $
\State $\omega_{k} \leftarrow \omega_{k-1} \beta_{k}^{-\beta_{\omega}}  $
\Else
\State $y_{k} \leftarrow y_{k-1}$
\State $\beta_{k} \leftarrow  \tau \beta_{k-1}$
\State $\eta_{k} \leftarrow \bar{\eta} \bar{\gamma}^{\alpha_{\eta}} \beta_{k}^{-\alpha_{\eta}}  $
\State $\omega_{k} \leftarrow \bar{\omega} \bar{\gamma}^{\alpha_{\omega}}\beta_{k} ^{-\alpha_{\omega}} $
\EndIf
\EndFor
\Ensure $x_k$
\end{algorithmic}
\end{algorithm}

\subsection{Equality Constraints} \label{sec:constraints}

To obtain new discrete PDs, we start from a numerical solution of \eqref{eq: unconstr_LSQ_min}, and use the equality constraint
$h_{\mathrm{discr}} (x):= x\cdot (x-1) \cdot (x+1)$,
where `$\cdot$' indicates elementwise multiplication. The convergence is in general very fast when the PD is discretizable, as discussed in \Cref{sec:discret_inv_trans}.

Another possibility, for example for non-discretizable PDs, is to start from a solution of \eqref{eq: unconstr_LSQ_min} and use the equality constraint
$h_I(x):= x(I)$,
where $I$ is the set of indices $i\in \lbrace 1 \dots (mp+pn+mn)R\rbrace$ for which $\lvert x^*(i) \rvert$ is smaller than a certain tolerance, e.g., $10^{-2}$. This procedure can be applied recursively if necessary. The PDs that are obtained are sparse but in general not discrete. However, from these sparse numerical PDs parameter families can be discovered. These parametrizations and the procedure to obtain them are discussed in more detail in \Cref{sec:results}. 

To investigate additional invariances or parametrizations we also use the vectorization of a discrete and sparse PD $x^*$ and add a random perturbation, i.e., $x_0 := x^* + 10^{-2} \mlin{randn}((mn+mp+pn)R,1)$, where $\mlin{randn}$ is a build-in {\Matlab} function to generate normally distributed pseudorandom numbers. We use such a starting point in combination with the equality constraint
$h_I(x)$ defined above,
but where $I$ is the set of indices for which $x^*(i)=0$. This equality constraint ensures that no additional nonzeros are created in the output of the AL method. Remark that the PQR-transformation from \Cref{sec:inv_transf} in general creates additional nonzeros and thus other invariances can separately be investigated with this constraint.

\subsection{Starting Points} \label{sec:starting_points}

If the starting points are generated randomly, e.g, $x_0(i) \sim \mathcal{N}(0,1)$, for $i=1 \dots (mp+np+mn)R$, we observed that $f(x_0)$ is, in general, large and consequently the LM method gets easily stuck in regions of very slow convergence around local minima of \eqref{eq: unconstr_LSQ_min}, which typically occur at objective values which are a multiple of 0.5. 
These regions are called swamps and an example convergence with different swamps is shown in \Cref{fig:swamps} for $T_{333}$.

\begin{figure}[ht]
\centering
\includegraphics[width= 0.5\linewidth]{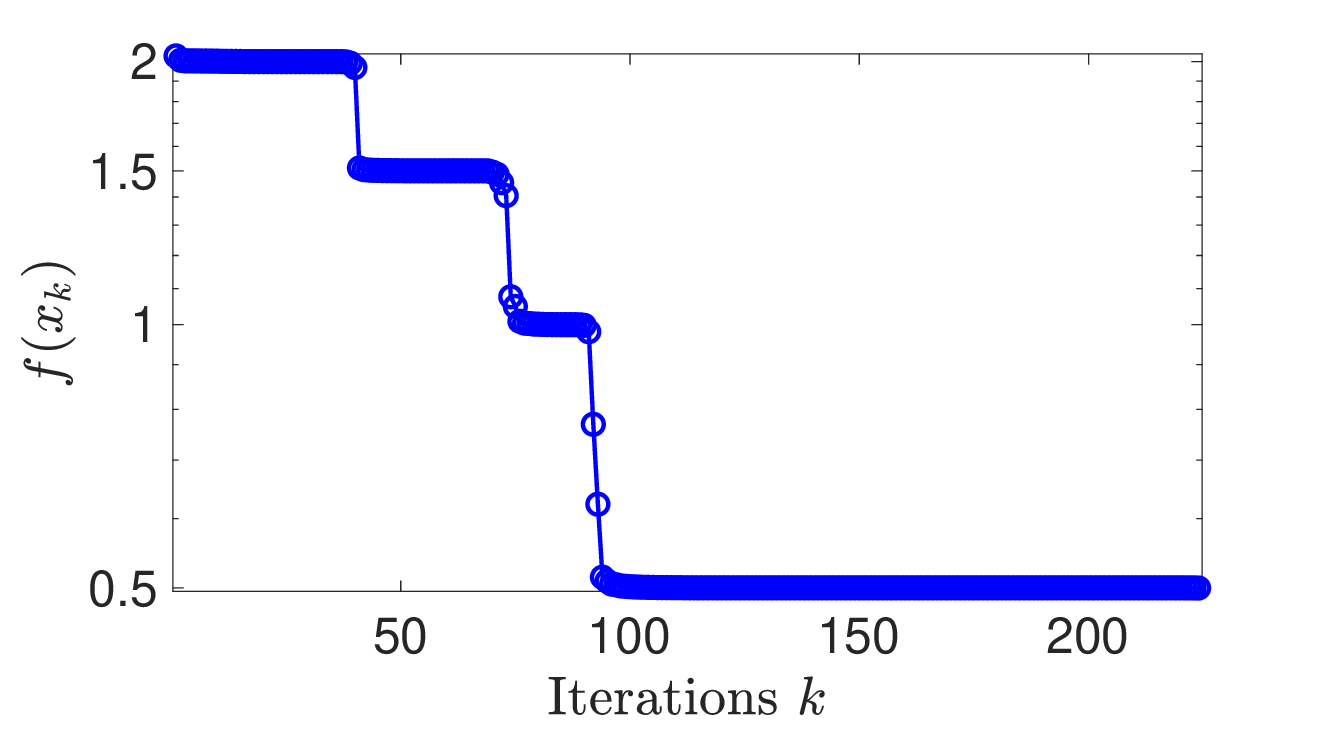}
\caption{Illustration of the swamps around stationary points of \eqref{eq: unconstr_LSQ_min} for $T_{333}$ and the standard LM method described in \Cref{sec:LM}.}
\label{fig:swamps}
\end{figure}

Because of the structure of $T_{mpn}$, containing $mpn$ ones, we know that for $x_0:=0$, $f(x_0)$ is $0.5(mpn)^2$. However, the gradient and Hessian at $x_0$ are zero as well, which means that $x_0$ is an inflection point and thus we need to add a small perturbation. 
We found that a good size of the perturbation is, e.g., $10^{-2}$ or $x_0(i) \sim \mathcal{N}\left(0,10^{-4}\right)$. In practice, we generate such starting points with $\mlin{randn}$ in \Matlab. Because the perturbation is small, the objective only increases slightly. Another possibility is to use uniformly distributed elements in $[0,1]$ using $\mlin{rand}$.
An example convergence of the AL method with $u:=-l:=1$ for 20 random starting points generated with \mlin{randn} of size $10^{-1}$ for $T_{333}$ and rank 23 is shown in \Cref{fig:randn_10-1_rng1-20_R23_ALM_ul1}. As can be seen, the method converges to a numerical solution of \eqref{eq: unconstr_LSQ_min} up to machine precision for 8 out of the 20 starting points. As an illustration, 5 different values for the rank of the Jacobian matrix are found among these numerical solutions (538, 539, 543-545), and at least one of these PDs is not discretizable as it does not satisfy one of the necessary conditions.
If we lower the rank to 22, the convergence for 100 random starting points is shown in \Cref{fig:10-1_rand_ALM_ul1_R22_rng1-100_cost}. As can be seen, the most accurate value of the cost function that we are able to obtain is approximately $10^{-2}$ for $u:=-l:=1$. We suspect that these approximate solutions converge to BRPDs.

\begin{figure}[htbp]
\captionsetup[subfigure]{justification=centering}
\begin{subfigure}{0.49\textwidth}
\includegraphics[width= \textwidth]{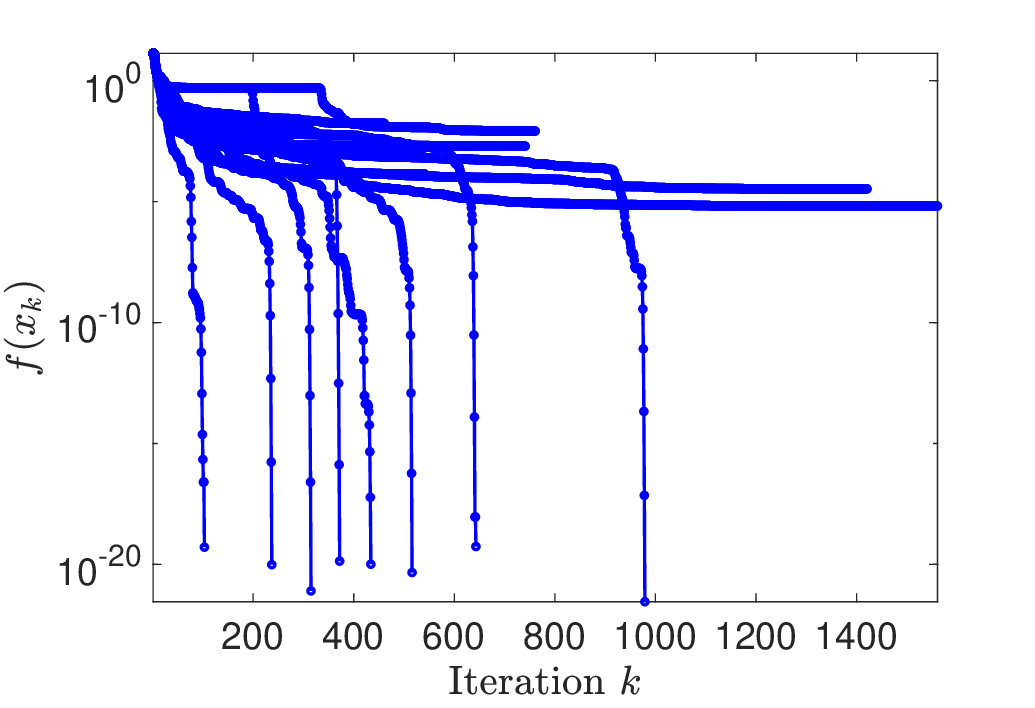}
\caption{20 random starting points for $R:=23$.}
\label{fig:randn_10-1_rng1-20_R23_ALM_ul1}
\end{subfigure}
\begin{subfigure}{0.49\textwidth}
\centering
\includegraphics[width= \textwidth]{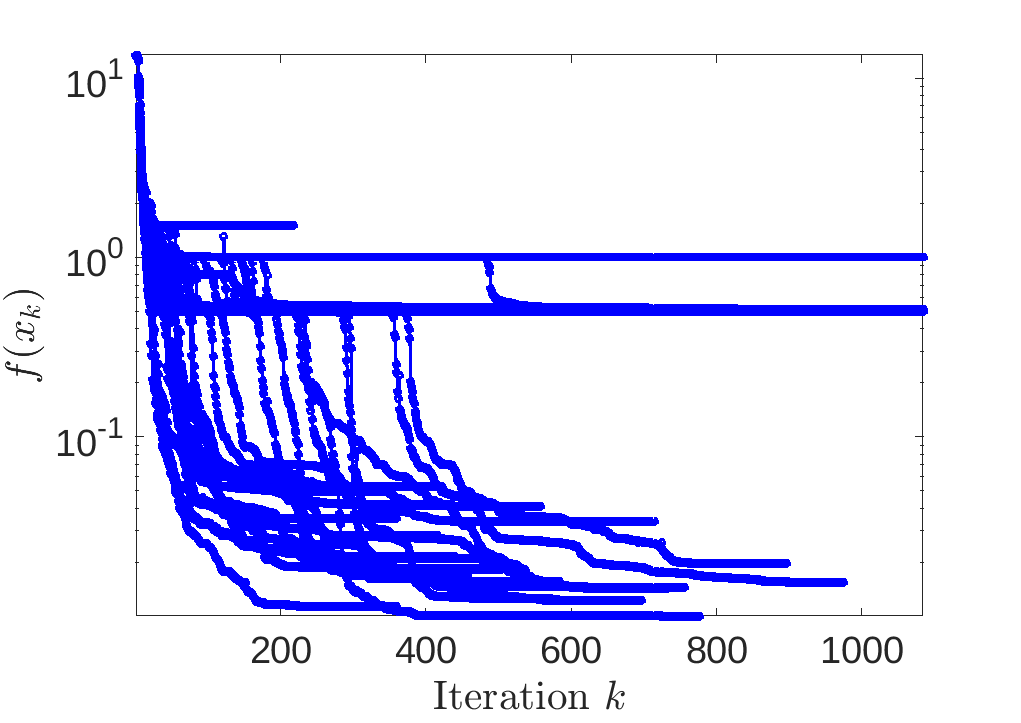}
\caption{100 random starting points for $R:=22$.}
\label{fig:10-1_rand_ALM_ul1_R22_rng1-100_cost}
\end{subfigure}
\caption{Example convergence of the AL method with $u:=-l:=1$ for different random starting points generated using $\mlin{randn}$ of size $10^{-1}$ in {\Matlab} for $T_{333}$.}
\end{figure}

The initial cost function can further be lowered by choosing the points such that each rank one tensor generates one of the $mpn$ ones in the tensor. Such starting points have a cost function of $0.5(mpn-R)^2$. These discrete starting points are also stationary points of \eqref{eq: unconstr_LSQ_min} in general, and thus we add again a perturbation of size, e.g., $10^{-2}$, such that $x_0:= x_{0,\mathrm{discr}} + 10^{-2} \mlin{randn}((mp+np+mn)R,1)$. Note that there are $\frac{mpn!}{R!(mpn-R)!}$ such combinations of starting points. For $T_{333}$, rank 23, there are $17550$ combinations. The convergence of these starting points can be very fast but equally very slow. It is thus important to try sufficiently many combinations.

We noticed that for larger values of $m$, $p$, and $n$, it can be a good idea to start from a known discrete PD, set all elements in the factor matrices that do not generate one of the $mpn$ ones in the tensor to zero, add a small perturbation, and use this as starting point to obtain new (discrete) PDs. We show in \Cref{fig:recursive_sol_Strassen_m4_R49_triv1s+10-1_randn_ALM_ul1_rng5-10_cost} the convergence for 6 random starting points for $T_{444}$, rank 49, based on these 'trivial' ones in $\mathrm{PD}_{\mathrm{Strassen}}^\mathrm{rec}$. All starting points converge quickly to a numerical exact PD and two different values of $\rank(J(x^*))$ are obtained: 2155 and 2182. As a reference, the rank of $J(x^*)$ at $\mathrm{PD}_{\mathrm{Strassen}}^\mathrm{rec}$ is 2101. All PDs satisfy the necessary conditions to be discretizable discussed in \Cref{sec:discret_inv_trans}. 

\begin{figure}[ht]
\captionsetup[subfigure]{justification=centering}
\begin{subfigure}{0.49\textwidth}
\includegraphics[width= \textwidth]{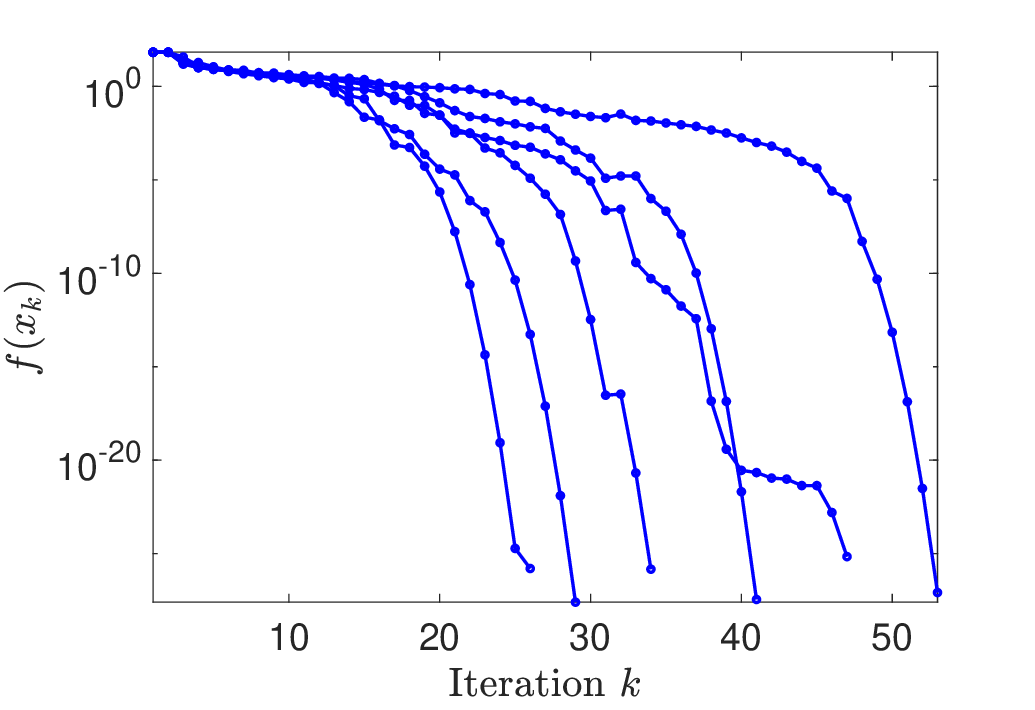}
\caption{Six starting points obtained using the 'trivial' $mpn$ ones in $\mathrm{PD}_{\mathrm{Strassen}}^{\mathrm{rec}}$ and with a small random perturbation of size $10^{-1}$.}
\label{fig:recursive_sol_Strassen_m4_R49_triv1s+10-1_randn_ALM_ul1_rng5-10_cost}
\end{subfigure}
\begin{subfigure}{0.49\textwidth}
\centering
\includegraphics[width= \textwidth]{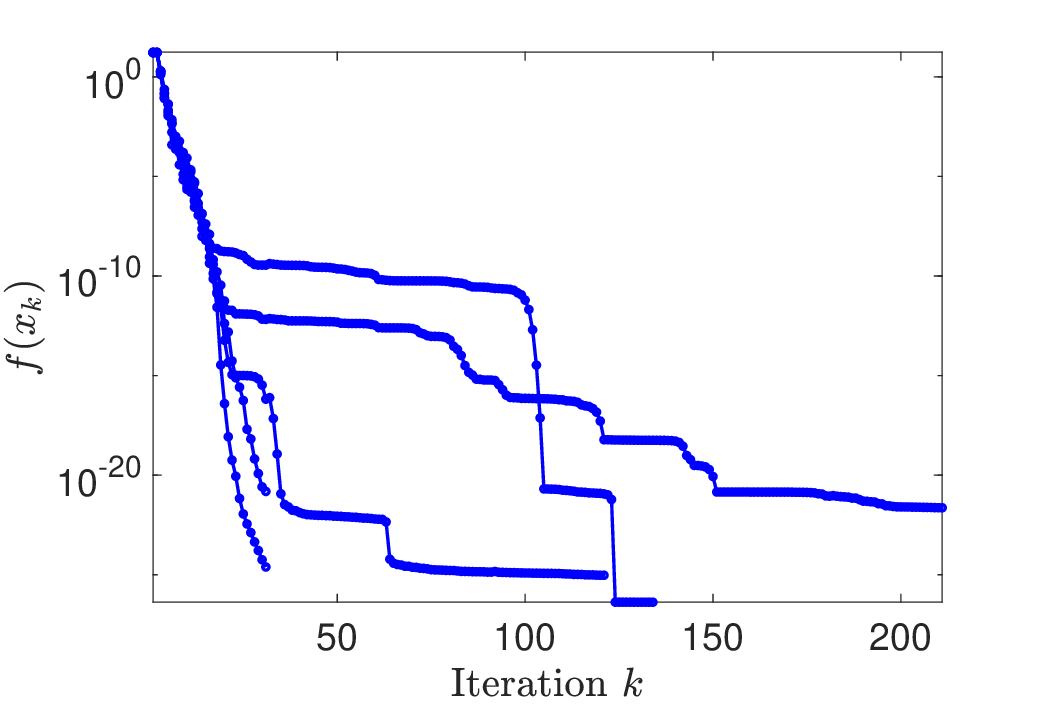}
\caption{Five random starting points obtained by adding a small random perturbation of size $10^{-1}$ to $\mathrm{PD}_{\mathrm{Strassen}}^{\mathrm{rec}}$.}
\label{fig:recursive_sol_Strassen_m4_R49_10-1_randn_ALM_ul1_rng1-5_cost}
\end{subfigure}
\caption{Convergence of the AL method for $u:=-l:=1$ for different starting points for $T_{444}$, rank 49.}
\end{figure}

Lastly, when we add a perturbation, e.g., of size $10^{-1}$, to a known (discrete) vectorized PD $x^*$:
\begin{equation}
x_0 := x^* + 10^{-1} \mlin{randn}((mn+mp+pn)R,1),
\end{equation}
the convergence is in general very fast. An example convergence for 5 starting points for $T_{444}$, rank 49, based on $\mathrm{PD}_{\mathrm{Strassen}}^{\mathrm{rec}}$, is shown in \Cref{fig:recursive_sol_Strassen_m4_R49_10-1_randn_ALM_ul1_rng1-5_cost}. Again, all starting points converge to a numerical solution up to machine precision. Because we start close to a known PD, the rank of the Jacobian stays in general the same. All PDs again satisfy the necessary conditions to be discretizable. Remark that starting from a perturbation of $x^*$ can be slower compared to when we only start from the $mpn$ ones because the norm of the starting point is higher in this case and the bound constraint in the AL method restricts the norm. This can be resolved by making the upper and lower bound higher in absolute value, e.g., $u:=-l:=2$. When adding a constraint on the number of nonzeros, these starting points can be used to discover parameter families of PDs.

\section{Results}\label{sec:results}
In this section, we give the results that we obtained with the AL method described in \Cref{sec:ALM}. In the first subsection, we give an example of an additional invariance that we noticed frequently among PDs of $T_{mpn}$ and furthermore we describe our procedure to obtain parameter families. Then, in \Cref{sec:param_and_stab_improv}, we give the different parametrizations that we have obtained together with the optimal parameters for which an improvement of the stability is attained. Finally, in \Cref{sec:approx_decomp}, we give the cost function and bounds $u$ and $l$ of the most accurate approximate PDs that we obtained for ranks lower than $\tilde{R}$.

\subsection{Additional Invariances} \label{sec:add_inv}
When investigating existing PDs, we noticed that some subsets of the rank-1 tensors in \eqref{eq:CPD} have additional invariances. These additional invariances can be grouped together into a parametrization or a parameter family of PDs of $T_{mpn}$. For example, we noticed that many (discrete) PDs satisfy $u_i = u_j = u$, for some $i\neq j$:
\begin{equation*}
T_{ij} := u \otimes v_i \otimes w_i + u \otimes v_j \otimes w_j.
\end{equation*} 
The Jacobian $J(x)$ at this PD has a rank deficiency of 2 compared to generic rank-2 PDs and indeed, a two-parameter family of PDs of $T_{ij}$ exists:
\begin{equation*}
T_{ij} = \frac{u}{1-ab} \otimes \left( v_i + av_j \right) \otimes \left( w_i - b w_j \right) + \frac{u}{1-ab} \otimes \left( v_j + b v_i \right) \otimes \left( w_j - a w_i \right),
\end{equation*}
for all $a,b \in \mathbb{R}$, as long as $ab \neq 1$. Remark that the scaling invariance is not included in this parametrization. Furthermore, in general it holds that
\begin{equation*}
\begin{split}
&\mathrm{sort} \left( \begin{bmatrix} \mathrm{trace} \left( U V_i W_i\right) & \mathrm{trace} \left( U V_j W_j \right) \end{bmatrix} \right) \neq~ \\
 &\mathrm{sort} \left( \begin{bmatrix} \mathrm{trace} \Big( \frac{1}{1-ab} U \left(V_i + a V_j \right) \left(W_i - b W_j \right)\Big) & \mathrm{trace} \left( \frac{1}{1-ab} U \left(V_j + b V_i \right) \left(W_j - a W_i \right)  \right)  \end{bmatrix} \right)
\end{split}
\end{equation*}
where `$\mathrm{sort}$' represents a sorting operation, and the non-equality is applied element wise. Consequently, this transformation or parametrization is not trace-invariant, which means that two PDs that are not inv-equivalent can still be equivalent by this (or another) continuous transformation.
We encountered many similar examples of additional invariances or parametrizations. The advantage of these parametrizations is that they can be used to find the most stable PD within the family of PDs.

As explained in \Cref{sec:Jacobian}, the rank deficiency of the Jacobian matrix can be used as an upper bound on the number of parameters that can be introduced. In \Cref{tab:svdJ_N}, the different new ranks we found for $\mathrm{PD}_{47} \left( T_{345} \right)$ and $\mathrm{PD}_{49} \left( T_{444} \right)$ are given (underlined) and compared with the ranks at the PDs that were recently found by DeepMind (bold) \cite{DeepMind_2022}. As a reference, we add the rank at $\mathrm{PD}_{\mathrm{Strassen}}^{\mathrm{rec}}$, which is 2101. Similarly, the rank at all $\mathrm{PD}_7 \left( T_{222} \right)$s is 61. 

The rank of $J(x)$ is however not useful if we want to investigate the additional invariances and not the scaling- and PQR-invariance from \Cref{sec:inv_transf}. The PQR-transformation creates in general additional nonzeros in the factor matrices. That is why we use the constraint $h_I(x)$, where $x^*(I)=0$, as explained in \Cref{sec:constraints}. An upper bound on the number of parameters that can be introduced in the sparse $\mathrm{PD}_R$ $x^*$ can be obtained by:
\begin{equation*}
\min \left( \mathrm{size} \left( \begin{bmatrix}
J(x^*) \\
J_{h_I}(x^*)
\end{bmatrix} \right) \right) - \mathrm{rank} \left( \begin{bmatrix}
J(x^*) \\
J_{h_I}(x^*)
\end{bmatrix} \right) - 2R,
\end{equation*}
where $J_{h_I}(x^*)$ is the Jacobian matrix of $h_I(x)$, evaluated at $x^*$, and $\mathrm{size}(\cdot)$ returns a tuple containing the dimensions of a matrix. 

When looking at the elements of a sparse numerical PD, a certain pattern can be recognised. For example some values occur more than once. We introduce the parameters manually by first choosing as parameters the values that appear most frequently in the PD. Note that a PD, see \eqref{eq:CPD}, is a set of third order polynomial equations in the elements of the factor matrices. Because the PDs we use are sparse, these equations only have few terms, preferably one or two. Using these simple equations, the other numerical values can be written in function of the chosen parameters. This can be a process of trial and error but is in general relatively easy.

To improve the stability, we first check if some of the parameters can be chosen such that the PD is more sparse. When this is the case, the prefactor $q$ is in general lowered. Afterwards, we import the obtained parametrizations in {\Matlab} and use the build-in nonlinear solver $\mlin{fmincon}$ to obtain an approximate minimizer for the stability parameter $e$ \eqref{eq:e_q}. Finally we round the obtained numerical parameters to the nearest power of two to obtain a practical FMM algorithm. 

\begin{table}[htbp]
\centering
\caption{Rank of $J(x^*)$ for different values of $m$, $p$, and $n$. The bold ranks indicate the ranks among PDs found by DeepMind \cite{DeepMind_2022}. The underlined ranks indicate the ranks of new PDs from this paper.}
\label{tab:svdJ_N}
\begin{tabular}{c c c c | c c } 
\boldmath $m$ & \boldmath $p$ & \boldmath $n$ & \boldmath $\tilde{R}$ & \boldmath $\mathrm{size}(J)$ & \boldmath $\mathrm{rank}\left( J(x^*) \right)$ \\
\midrule
2 & 2 & 2 & 7 & $64 \times 84$ & $61$ \\
3 & 4 & 5 & 47 & $ 3600 \times 2209$ & \mbox{ \boldmath $2051$}, $\underline{2053}$\\
\multirow{2}{*}{4} & \multirow{2}{*}{4} & \multirow{2}{*}{4} & \multirow{2}{*}{49} & \multirow{2}{*}{$4096 \times 2352$} & $2101$, \mbox{\boldmath $2144,2145,2149-2155,2158,2161,2169,2170$}, \\
& & & &  & \mbox{ \boldmath $2172-2176$}, $\underline{2182,2184,2188,2195}$, \mbox{\boldmath $2196-2201$} \\
\bottomrule
\end{tabular}
\end{table}

\subsection{Parametrizations and Stability Improvements}\label{sec:param_and_stab_improv}

The stability improvements and correspondingly the different parametrizations are summarized in \Cref{tab:results_param} for different combinations of $m,p$, and $n$. We always choose the lowest rank $\tilde{R}$ known in the literature for a certain combination of $m$, $p$, and $n$, as we were not able to find exact PDs of lower rank in our experiments. 

\begin{table}[htbp]
\caption{Overview of the different stability improvements, corresponding parametrizations, and optimal parameters for different values of $m$, $p$, and $n$.}
\label{tab:results_param}
\centering
\resizebox{\textwidth}{!}{\begin{minipage}{\textwidth}
\begin{tabular}{c c c | c c c c c c}
\mbox{\boldmath $m$} & \boldmath $p$ & \boldmath $n$ & \boldmath $\tilde{R}$ & \boldmath $(q,e)_{\text{old}}$ & Ref. & \boldmath $(q,e)_{\text{new}}$ & Param. & Optimal Parameters \\
\midrule
\multirow{2}{*}{3} & \multirow{2}{*}{3} & \multirow{2}{*}{3} & \multirow{2}{*}{23} & \multirow{2}{*}{$(13,31)$} & \multirow{2}{*}{\cite{Smirnov2013}}  & \multirow{2}{*}{$(13,27.25)$} & \multirow{2}{*}{\Cref{tab:9_param_Smirnov}} & $(b,d,g):=\frac{1}{4},(k,p):=\frac{1}{2}$, \\
& & & & & & & & $(a,h):=1,(c,f):=2$ \\
5 & 4 & 3 & \multirow{6}{*}{47} & $(21,112)$ & \cite{DeepMind_2022} & $(20,66)$ & \Cref{tab:param_R47_U_1,tab:param_R47_U_2,tab:param_R47_V,tab:param_R47_W} & $a:=\frac{1}{4},b:=2,c:=\frac{1}{2}$ \\
4 & 5 & 3 & & $(26,99)$ & \cite{DeepMind_2022} & $(25,88)$ & \Cref{tab:param_R47_U_1,tab:param_R47_U_2,tab:param_R47_V,tab:param_R47_W} & $a:=1,b:=2,c:=\frac{1}{2}$ \\
5 & 3 & 4 & & $(19,83)$ & \cite{DeepMind_2022} & $(18,75)$ & \Cref{tab:param_R47_U_1,tab:param_R47_U_2,tab:param_R47_V,tab:param_R47_W} & $(a,b,c):=1$ \\
3 & 4 & 5 & & $(21,112)$ & \cite{DeepMind_2022} & $(20,66)$ & \Cref{tab:param_R47_U_1,tab:param_R47_U_2,tab:param_R47_V,tab:param_R47_W} & $a:=1,b:=2,c:=0.5$ \\
4 & 3 & 5 & & $(19,83)$ & \cite{DeepMind_2022} & $(18,75)$ & \Cref{tab:param_R47_U_1,tab:param_R47_U_2,tab:param_R47_V,tab:param_R47_W} & $(a,b,c):=1$ \\
3 & 5 & 4 & & $(26,99)$ & \cite{DeepMind_2022} & $(25,86.5)$ & \Cref{tab:param_R47_U_1,tab:param_R47_U_2,tab:param_R47_V,tab:param_R47_W} & $a:=1,b:=4,c:=\frac{1}{4}$ \\
\multirow{2}{*}{4} & \multirow{2}{*}{4} & \multirow{2}{*}{4} & \multirow{2}{*}{49} & \multirow{2}{*}{$(23,136)$} & \multirow{2}{*}{\cite{DeepMind_2022}}  & \multirow{2}{*}{$(22,99)$} & \multirow{2}{*}{\Cref{tab:param_R49_U,tab:param_R49_V,tab:param_R49_W}} & $(b,f,g,h,p,t):=\frac{1}{2}$, \\
& & & & & & & & $(c,d,n,s,w):=1,a=2,v:=4$ \\
\end{tabular}
\end{minipage}}
\end{table}

\paragraph*{Rank 23 of \boldmath $T_{333}$}
A two-parameter family of a non-discretizable PD is given in \Cref{tab:non-discr_R23}. The parameters $a$ and $b$ can be chosen such that all elements are rational numbers but not discrete numbers.
The PDs in this parametrization contain more nonzeros than the most sparse PD known \cite{Smirnov2013}. The rank of $J(x^*)$  at a generic PD (randomly generated parameters) in this family is 541. Remark that when $a$ is close to $-1$ or $0$, some elements in the PD go to infinity. We obtained this parametrization by performing sufficiently many experiments for random starting points close to zero, i.e., $x_0:= 10^{-2} \mlin{randn} \left( (mp+pn+mn)R,1 \right)$, and $u:=-l:=1$. When we find a numerical solution $x^*$ up to machine precision, we add the equality constraint $h_I(x)$, based on the element in $x^*$ smaller than a certain tolerance, e.g., $10^{-2}$. This process can be applied recursively if necessary. When we obtain a sparse numerical solution, we can obtain the parametrization manually as described at the end of \Cref{sec:add_inv}.

In \Cref{tab:9_param_Smirnov}, a nine-parameter family of PDs is shown, which is obtained starting from:
\begin{equation*}
x_0 := x^* + 10^{-1} \mlin{randn} \left( (mp+pn+mn)R,1 \right),
\end{equation*}
where $x^*$ is the vectorization of the discrete PD found by Smirnov \cite{Smirnov2013} of rank 23. We use the equality constraint $h_I(x)$, based on the zeros in $x^*$, and $u:=-l:=2$ to obtain a numerical sparse PD in this family and afterwards obtain the parametrization manually again as described at the end of \Cref{sec:add_inv}. Using this parametrization, we reduce the stability parameter from $e=31$ to $e=27.25$ for $(b,d,g):=\frac{1}{4},(k,p):=\frac{1}{2},(a,h):=1$, and $(c,f):=2$. The prefactor remains the same and equal to $13$.  
The rank of $J(x)$ is 534 at this PD. We noticed that in general, the sparser the PD, the lower the rank of $J(x)$.

\begin{sidewaystable}
\caption{Non-discretizable two-parameter family of $\mathrm{PD}_{23}(T_{333})$s with parameters $a$ and $b$, where $v = a^3+1$, $w = 1-a$, and $z = \frac{a^4w-bv^2}{va^3}$.}
\label{tab:non-discr_R23}
\resizebox{\textwidth}{!}{\begin{minipage}{\textwidth}
\begin{tabular}{c c |c c c c c c c c c c c c c c c c c c c c c c c }
 & & 1 & 2 & 3 & 4 & 5 & 6 & 7 & 8 & 9 & 10 & 11 & 12 & 13 & 14 & 15 & 16 & 17 & 18 & 19 & 20 & 21 & 22 & 23 \\
\cmidrule{1-25}
\multirow{9}{*}{\begin{sideways}$U$ \end{sideways}} & 1&$0$& $a^2$ & $1$ & $-a^2$ &$0$&$0$&$0$& $-a^2$ &$0$& $a^2$ &$0$&$0$&$0$&$0$&$0$&$0$&$0$&$0$&$0$& $a^2$ &$0$&$0$&$0$\\
& 2 &$0$&$0$&$0$&$0$&$0$&$0$&$0$&$0$&$0$& $1$ & $-1$ &$0$&$0$& $-1$ &$0$&$0$&$0$& $-1$ &$0$&$0$& $\frac{1}{a}$ &$0$&$0$\\
& 3 & $a^2$ & $a^2$ &$0$& $-a^2$ &$0$&$0$& $1$ &$0$&$0$&$0$&$0$&$0$& $a^3$ &$0$&$0$& $1$ & $-1$ & $a^2$ &$0$& $a^2$ &$0$&$0$&$0$\\
& 4 &$0$& $-1$ &$0$&$0$& $1$ &$0$&$0$& $1$ &$0$& $-1$ &$0$&$0$&$0$&$0$&$0$&$0$&$0$&$0$&$0$& $-1$ &$0$&$0$&$0$\\
& 5 &$0$& $a$ &$0$&$0$&$0$& $-a^3$  &$0$&$0$&$0$& $a$ & $-a$ & $1$ &$0$&$0$&$0$&$0$&$0$&$0$&$0$&$0$& $1$ & $\frac{-a}{v}$ &$0$\\
& 6 & $-1$ & $-1$ &$0$& $-a^3$ &$0$&$0$&$0$&$0$&$0$&$0$&$0$&$0$& $-a$ &$0$&$0$& $a$ & $-a$ & $-1$ &$0$& $a^3$ &$0$& $1$ & $1$ \\
& 7 & $\frac{w}{a^2}$	& $\frac{-vw}{a^2}$	& $1$	& $aw$ & $\frac{w}{a^2}$ & $v-a$ &$0$& $\frac{1-av}{a^2}$ & $a-v$ & $\frac{-vw}{a^2}$ & $a$ &$0$&$0$&$0$& $-1$ &$0$& $\frac{1}{a}$ &$0$& $1$ & $\frac{-vw}{a^2}$ &$0$&$0$&$0$\\
& 8 & $1$ & $\frac{w}{a}$	&$0$&$0$&$0$& $2a^2-a$ &$0$&$0$& $-a^2$ &$0$&$0$& $\frac{w}{a^2}$ & $1$ & $-1$ & $a$ &$0$&$0$& $\frac{w}{a}$ &$0$&$0$&$0$& $\frac{-w}{av}$ &$0$\\
& 9 &$0$& $\frac{-vw}{a^2}$ &$0$&$0$&$0$& $vw$ & $1$ &$0$& $-vw$ &$0$&$0$&$0$& $\frac{-vw}{a}$ &$0$& $-v$ &$0$&$0$& $\frac{-vw}{a^2}$ &$0$&$0$&$0$& $\frac{w}{a^2}$ & $\frac{w}{a^2}$ \\ 
\cmidrule{1-25}
\multirow{9}{*}{\begin{sideways}$V$ \end{sideways}} & 1&$\frac{v}{a^2}$ &$0$&$0$& $\frac{1}{a^2}$ &$0$&$0$& $\frac{-w}{a}$ &$0$&$0$&$0$&$0$& 	$0$ & $\frac{w}{a}$ &$0$&$0$& $-a$ & $\frac{w}{a}$ & $1$ &$0$&$0$&$0$& $0$& $-a$ \\ 
& 2 & $ \frac{-v}{a^3}$ & $\frac{-1}{bv}$ & $0$& $1$ & $0$&$0$&	$\frac{w}{a^2}$ &$0$&$0$& 	$0$ &$0$& $0$& $\frac{-w}{a^2}$  &$0$&$0$& $\frac{-w}{a}$ & $ \frac{-w}{a^2} $ &$0$&$0$& $1$ &$0$& $\frac{1}{b}$ & $1$ \\
& 3 &$\frac{v}{aw}$ &$0$&$0$&$0$&$0$&$0$& $-1$ &$0$&$0$&$0$&$0$&$0$& $1$ &$0$&$0$& $a$ & $1$ &$0$&$0$&$0$&$0$&$0$& $a$ \\
& 4 & $a$ & $-1$ & $-1$ & $\frac{a}{v}$ & $-a^2$ & $-a$ &$0$&$0$& $-1$ & $1$ & $1$ &$0$&$0$&$0$& $-aw$ &$0$& $\frac{a^2}{v}$ &  $\frac{bv}{a}$ & $1$ &$0$&$0$& $1$ &$0$\\
& 5 & $-1$ &$0$& $-a^2$ & $\frac{a^3}{v}$  & $-a^4$  & $1$ &$0$& $1$ & $\frac{1}{a}$ & $0$&$0$&$0$&$0$& $0$ & $w$ &$0$& $\frac{a^4}{v}$ &$0$& $\frac{w}{a^2}$	 &$0$&$0$& $a^2$ &$0$\\ 
& 6 &$0$&$0$& $0$ &$0$& $0$ &$0$& $0$ &$0$& 	$1$ &$0$& $0$ & $0$	 &$0$&$0$& $-a^2$ &$0$&$0$&$0$& $-1$ & $0$ & $0$ &$0$&$0$\\
& 7 & $1$ & $\frac{1}{a^2}$ &$0$&$0$& 	$1$ & $\frac{w}{a}$ &$0$&$0$&$0$& $\frac{-1}{a^2}$ &	$\frac{-w}{a^2}$ & $a$ & $\frac{w}{v}$ &$\frac{-w}{a^2}$ & $\frac{w}{a}$ &$0$&$0$& $z$ &$0$&$0$& $1$ & $\frac{-1}{a^2}$ &$0$\\
& 8 & $\frac{-1}{a} $ &$0$&$0$&$0$& $a^2$ & $\frac{-w}{a^2}$ &$0$&$0$&$0$& $0$ & $\frac{w}{a^3}$ & $ -1$ & $\frac{-w}{av}$ & $\frac{w}{a^3}$ & $\frac{-w}{a^2}$ &$0$& $0$ & $\frac{-w}{v}$  &$0$&$0$& $\frac{w}{a^2}$ & $-1$ &$0$\\
& 9 & $\frac{a}{w}$ &$0$&$0$&$0$&$0$& $1$ & $0$ &$0$&$0$ & $0$	& $\frac{-1}{a}$	& $-a$ & $\frac{a}{v}$& $\frac{-1}{a}$ &	$1$ &$0$& $0$ & $\frac{a^2}{v}$	 &$0$& $0$& $-1$ & $0$ &$0$
\\
\cmidrule{1-25}
\multirow{9}{*}{\begin{sideways}$W$ \end{sideways}} & 1&$0$&$0$& $\frac{a^3}{v}$ & $-1$ &$0$&$0$&$0$&$0$&$0$&$0$&$0$&$0$&$0$&$0$&$0$& $1$ & $a$ &$0$&$0$& $-1$ &$0$&$0$&$0$\\
& 2&$0$&$0$& $-1$ &$0$&$0$&$0$&$0$& $1$ &$0$&$0$&$0$&$0$&$0$&$0$&$0$&$0$&$0$& $0$ &$-1$&$0$&$0$&$0$&$0$\\
&3&$0$&$0$& $-a^2$ &$0$& $\frac{1}{a^2}$ &$0$&$0$& $a^2$ &$0$& $-1$ & $-1$ &$0$&$0$&$0$&$0$&$0$&$0$&$0$&$0$&$0$&$-1$&$0$&$0$\\
&4&$0$& $\frac{-bv}{a}$ &$0$&$0$&$0$&$0$&$0$&$0$&$0$& $\frac{-bv}{a}$ &$0$&$0$& $1$ & $\frac{-a^3}{v}$ &$0$& $-a^2$ &$0$& $-1$ &$0$& $\frac{-1}{a}$ &$0$&$0$& $v$ \\ 
&5&$0$&$0$&$0$&$0$&$0$& $\frac{-1}{a^3}$  &$0$&$0$& $\frac{-1}{a}$ &$0$& $-1$ &$0$&$0$& $1$ & $\frac{-w}{a^3}$ &$0$&$0$&$0$& $a^2$ &$0$&$0$&$0$&$0$\\
&6&$0$&$0$&$0$&$0$&$0$&$0$&$0$&$0$&$0$&$0$&$0$& $-a$ &$0$& $a^2$ &$0$&$0$&$0$&$0$&$0$&$0$& $a^2$ &$0$&$0$\\
&7& $0$ & $0$ & $0$ & $0$ & $0$ & $0$ & $-a$ & $0$ & $0$ & $0$ & $0$ & $0$ & $0$ & $0$ & $0$ & $-1$ & $0$ & $0$ & $0$ & $0$ & $0$ & $0$ & $a$ \\
&8& $\frac{1}{v}$ & $0$ & $0$ & $0$ & $0$ & $0$ & $\frac{v}{a^2}$ & $0$ & $\frac{-1}{v}$ & $0$ & $0$ & $0$ & $\frac{-1}{aw}$ & $0$ & $\frac{1}{av}$ & $0$ & $\frac{-1}{a^2}$ & $0$ & $1$ & $0$ & $0$ & $0$ & $0$ \\
&9& $\frac{a^2}{v}$ & $1$ & $0$ & $0$ & $0$ & $\frac{1}{v}$ & $v$ & $0$ & $0$ & $1$ & $1$ & $\frac{-a^2}{v}$ & $\frac{-a}{w}$ & $0$ & $0$ & $0$ & $-1$ & $0$ & $0$ & $\frac{1}{bv}$ & $1$ & $-1$ & $0$ \\ 
\bottomrule
\end{tabular}
\end{minipage}}
\end{sidewaystable}

\begin{sidewaystable}
\caption{Nine-parameter family with parameters $a,b,c,d,f,g,h,k$, and $p$, based on the $\mathrm{PD}_{23}(T_{333})$ of Smirnov \cite{Smirnov2013}.}
\label{tab:9_param_Smirnov}
\resizebox{\textwidth}{!}{\begin{minipage}{\textwidth}
\begin{tabular}{c c |c c c c c c c c c c c c c c c c c c c c c c c }
 & & 1 & 2 & 3 & 4 & 5 & 6 & 7 & 8 & 9 & 10 & 11 & 12 & 13 & 14 & 15 & 16 & 17 & 18 & 19 & 20 & 21 & 22 & 23 \\
\cmidrule{1-25} 
\multirow{9}{*}{\begin{sideways}$U$ \end{sideways}}  & 1 & $1$ & $0$ & $0$ & $0$ & $0$ & $0$ & $1$ & $1$ & $0$ & $0$ & $0$ & $0$ & $0$ & $-ak$ & $\frac{k}{d}$ & $0$ & $0$ & $0$ & $0$ & $0$ & $0$ & $0$ & $0$\\
 & 2 & $0$ & $0$ & $0$ & $1$ & $1$ & $0$ & $0$ & $0$ & $ab$ & $-a$ & $0$ & $0$ & $0$ & $0$ & $0$ & $0$ & $0$ & $0$ & $1$ & $0$ & $1$ & $1$& $0$\\
 & 3 & $0$ & $0$ & $0$ & $c$ & $0$ & $0$ & $0$ & $0$ & $0$ & $0$ & $0$ & $0$ & $0$ & $-a$ & $0$ & $0$ & $0$ & $0$ & $0$ & $0$ & $c$ & $0$& $0$\\
 & 4 & $bd$ & $-hb$ & $0$ & $0$ & $0$ & $0$ & $0$ & $0$ & $0$ & $-h$ & $1$ & $0$ & $0$ & $0$ & $0$ & $0$ & $0$ & $0$ & $0$ & $0$ & $0$ & $0$& $1$\\
 & 5 & $0$ & $0$ & $0$ & $0$ & $0$ & $0$ & $0$ & $0$ & $-b$ & $1$ & $0$ & $-b$ & $0$ & $0$ & $0$ & $-b$ & $1$ & $0$ & $0$ & $0$ & $0$ & $0$& $0$\\
 & 6 & $0$ & $0$ & $1$ & $0$ & $0$ & $0$ & $0$ & $0$ & $0$ & $0$ & $0$ & $-bc$ & $0$ & $1$ & $0$ & $-bc$ & $c$ & $0$ & $\frac{c}{a}$ & $0$ & $0$ & $0$& $0$\\
 & 7 & $-d$ & $0$ & $0$ & $0$ & $0$ & $0$ & $0$ & $0$ & $0$ & $0$ & $0$ & $0$ & $k$ & $0$ & $0$ & $0$ & $0$ & $-h$ & $0$ & $0$ & $0$ & $0$& $0$\\
 & 8 & $0$ & $1$ & $0$ & $0$ & $0$ & $1$ & $0$ & $0$ & $1$ & $0$ & $0$ & $1$ & $0$ & $0$ & $0$ & $1$ & $0$ & $1$ & $0$ & $0$ & $0$ & $0$& $0$\\
 & 9 & $0$ & $0$ & $0$ & $0$ & $0$ & $0$ & $0$ & $0$ & $0$ & $0$ & $0$ & $0$ & $1$ & $0$ & $1$ & $c$ & $0$ & $0$ & $0$ & $1$ & $-cd$ & $0$& $0$\\
\cmidrule{1-25}
\multirow{9}{*}{\begin{sideways}$V$ \end{sideways}}  & 1 & $0$ & $0$ & $1$ & $-f$ & $0$ & $0$ & $0$ & $0$ & $0$ & $0$ & $0$ & $0$ & $0$ & $1$ & $\frac{-da}{p}$ & $0$ & $0$ & $0$ & $1$ & $d$ & $1$ & $0$& $0$\\
 & 2 & $0$ & $0$ & $a$ & $0$ & $0$ & $0$ & $0$ & $0$ & $0$ & $0$ & $0$ & $0$ & $0$ & $0$ & $0$ & $0$ & $-f$ & $0$ & $0$ & $0$ & $0$ & $0$& $1$\\
 & 3 & $0$ & $0$ & $0$ & $0$ & $0$ & $0$ & $0$ & $0$ & $0$ & $0$ & $0$ & $-bf$ & $1$ & $0$ & $0$ & $-f$ & $-bf$ & $0$ & $0$ & $1$ & $0$ & $0$& $0$\\
 & 4 & $0$ & $0$ & $0$ & $1$ & $0$ & $0$ & $g$ & $0$ & $0$ & $0$ & $0$ & $0$ & $0$ & $0$ & $0$ & $0$ & $0$ & $0$ & $0$ & $0$ & $0$ & $1$& $0$\\
 & 5 & $0$ & $\frac{g}{bd}$ & $0$ & $0$ & $0$ & $1$ & $0$ & $0$ & $1$ & $\frac{-g}{d}$ & $0$ & $1$ & $0$ & $0$ & $0$ & $0$ & $1$ & $0$ & $0$ & $0$ & $0$ & $a$& $0$\\
 & 6 & $0$ & $0$ & $0$ & $0$ & $0$ & $b$ & $0$ & $0$ & $0$ & $0$ & $0$ & $b$ & $0$ & $0$ & $0$ & $1$ & $b$ & $\frac{-g}{d}$ & $0$ & $0$ & $0$ & $0$& $0$\\
 & 7 & $0$ & $0$ & $\frac{p}{a}$ & $0$ & $1$ & $0$ & $0$ & $d$ & $0$ & $0$ & $0$ & $0$ & $0$ & $\frac{p}{a}$ & $0$ & $0$ & $0$ & $0$ & $\frac{p}{a}$ & $0$ & $0$ & $0$& $0$\\
 & 8 & $0$ & $0$ & $p $ & $0$ & $a$ & $0$ & $0$ & $0$ & $0$ & $1$ & $1$ & $0$ & $0$ & $0$ & $0$ & $0$ & $0$ & $0$ & $p $ & $0$ & $0$ & $0$& $0$\\
 & 9 & $1$ & $1$ & $0$ & $0$ & $0$ & $0$ & $1$ & $1$ & $0$ & $0$ & $b$ & $0$ & $\frac{p}{a}$ & $0$ & $1$ & $0$ & $0$ & $1$ & $0$ & $0$ & $0$ & $0$& $\frac{-bp}{a}$\\
\cmidrule{1-25}
\multirow{9}{*}{\begin{sideways}$W$ \end{sideways}} &  1 & $\frac{p}{ad}$ & $0$ & $0$ & $0$ & $0$ & $0$ & $0$ & $0$ & $0$ & $0$ & $0$ & $0$ & $\frac{1}{k}$ & $0$ & $\frac{-p}{ak}$ & $0$ & $0$ & $0$ & $0$ & $\frac{-1}{k}$ & $0$ & $0$& $1$\\
 & 2 & $\frac{-1}{g}$ & $\frac{-d}{gh}$ & $0$ & $0$ & $0$ & $\frac{1}{bh}$ & $\frac{1}{g}$ & $0$ & $0$ & $0$ & $0$ & $0$ & $0$ & $0$ & $0$ & $0$ & $0$ & $\frac{d}{gh}$ & $0$ & $0$ & $0$ & $0$& $0$\\
 & 3 & $\frac{-1}{d}$ & $0$ & $0$ & $0$ & $0$ & $0$ & $0$ & $\frac{1}{d}$ & $0$ & $0$ & $1$ & $0$ & $0$ & $0$ & $0$ & $0$ & $0$ & $0$ & $0$ & $0$ & $0$ & $0$& $0$\\
 & 4 & $0$ & $0$ & $\frac{-c}{a}$ & $0$ & $\frac{-p}{a}$ & $\frac{1}{bf}$ & $0$ & $0$ & $0$ & $0$ & $0$ & $\frac{-1}{bf}$ & $0$ & $0$ & $0$ & $0$ & $\frac{-1}{f}$ & $0$ & $1$ & $0$ & $0$ & $0$& $0$\\
 & 5 & $0$ & $0$ & $0$ & $0$ & $0$ & $\frac{1}{b}$ & $0$ & $0$ & $\frac{-1}{b}$ & $0$ & $0$ & $0$ & $0$ & $0$ & $0$ & $0$ & $0$ & $0$ & $0$ & $0$ & $0$ & $1$& $0$\\
 & 6 & $0$ & $1$ & $0$ & $0$ & $1$ & $0$ & $0$ & $0$ & $\frac{-g}{bd}$ & $1$ & $h$ & $0$ & $0$ & $0$ & $0$ & $0$ & $0$ & $0$ & $0$ & $0$ & $0$ & $0$& $0$\\
 & 7 & $0$ & $0$ & $\frac{1}{a}$ & $0$ & $\frac{p}{ac}$ & $0$ & $0$ & $0$ & $0$ & $0$ & $0$ & $0$ & $0$ & $0$ & $0$ & $0$ & $0$ & $0$ & $\frac{-1}{c}$ & $1$ & $\frac{1}{c}$ & $0$& $0$\\
 & 8 & $0$ & $0$ & $0$ & $\frac{1}{c}$ & $0$ & $0$ & $0$ & $0$ & $\frac{1}{bc}$ & $0$ & $0$ & $\frac{-1}{bc}$ & $0$ & $0$ & $0$ & $\frac{1}{c}$ & $0$ & $0$ & $0$ & $f$ & $\frac{f}{c}$ & $\frac{-1}{c}$& $0$\\
 & 9 & $0$ & $0$ & $0$ & $0$ & $\frac{-1}{c}$ & $0$ & $0$ & $\frac{-k}{d}$ & $0$ & $0$ & $0$ & $0$ & $0$ & $\frac{-1}{p}$ & $1$ & $0$ & $0$ & $0$ & $\frac{a}{cp}$ & $0$ & $\frac{-a}{cp}$ & $0$& $0$\\ 
\bottomrule
\end{tabular}
\end{minipage}}
\end{sidewaystable}
 
\paragraph*{Rank 47 of \mbox{\boldmath $T_{345}$ and permutations of $m$, $p$, and $n$}.}
We obtain the three-parameter family of PDs in \Cref{tab:param_R47_U_1,tab:param_R47_U_2,tab:param_R47_V,tab:param_R47_W} in the same way as the nine-parameter family in \Cref{tab:9_param_Smirnov} but instead of the PD from Smirnov, we use the discrete PD of rank 47 recently found by DeepMind \cite{DeepMind_2022} in the starting point. The rank of the Jacobian at the PDs in this family is 2051, which is the same as for $x^*$. The stability improvements and different optimal parameters are given in \Cref{tab:results_param}. We improve both factors for all permutations of $m$, $p$, and $n$. The PDs in this family have 14 nonzeros less compared to the PD from DeepMind, which additionally decreases the constant $c$ in the asymptotic complexity.

\begin{table}[ht]
\centering
\caption{Matrix $U$ of the three-parameter family of $\mathrm{PD}_{47}(T_{345})$s with parameters $a$, $b$, and $c$, part 1.}
\label{tab:param_R47_U_1}
\resizebox{\textwidth}{!}{\begin{minipage}{\textwidth}
\begin{tabular}{c c | c c c c c c c c c c c c c c c c c}
 & & 1 & 2 & 3 & 4 & 5 & 6 & 7 & 8 & 9 & 10 & 11 & 12 & 13 & 14 & 15 & 16 \\
\cmidrule{1-18}
 \multirow{41}{*}{\begin{sideways}$U(:,1:32)$ \end{sideways}}  & 1 & $0$ & $0$ & $0$ & $0$ & $0$ & $0$ & $0$ & $0$ & $0$ & $0$ & $0$ & $1$ & $1$ & $0$ & $0$ & $0$\\
 & 2 & $0$ & $0$ & $0$ & $0$ & $0$ & $0$ & $0$ & $-1$ & $0$ & $-1$ & $1/a$ & $0$ & $0$ & $0$ & $0$ & $0$\\
 & 3 & $0$ & $0$ & $1$ & $0$ & $0$ & $0$ & $0$ & $0$ & $0$ & $0$ & $0$ & $0$ & $1$ & $0$ & $0$ & $0$\\
 & 4 & $0$ & $0$ & $-1$ & $0$ & $1$ & $0$ & $0$ & $0$ & $0$ & $0$ & $0$ & $-1$ & $-1$ & $1$ & $0$ & $1$\\
 & 5 & $0$ & $0$ & $-1$ & $-1$ & $0$ & $0$ & $0$ & $0$ & $0$ & $0$ & $0$ & $-1$ & $-1$ & $0$ & $0$ & $0$\\
 & 6 & $0$ & $0$ & $0$ & $0$ & $0$ & $0$ & $0$ & $0$ & $0$ & $0$ & $1$ & $1$ & $1$ & $0$ & $0$ & $0$\\
 & 7 & $0$ & $0$ & $0$ & $0$ & $0$ & $0$ & $0$ & $-1$ & $0$ & $-1$ & $0$ & $0$ & $0$ & $0$ & $0$ & $0$\\
 & 8 & $1$ & $0$ & $0$ & $0$ & $0$ & $0$ & $0$ & $0$ & $0$ & $-1$ & $0$ & $0$ & $0$ & $0$ & $0$ & $0$\\
 & 9 & $-1$ & $0$ & $0$ & $0$ & $0$ & $0$ & $0$ & $1$ & $0$ & $1$ & $0$ & $0$ & $0$ & $0$ & $0$ & $0$\\
 & 10 & $-1$ & $1$ & $0$ & $0$ & $0$ & $0$ & $0$ & $1$ & $1$ & $1$ & $0$ & $0$ & $0$ & $0$ & $1$ & $0$\\
 & 11 & $0$ & $0$ & $0$ & $0$ & $0$ & $0$ & $0$ & $0$ & $0$ & $0$ & $0$ & $0$ & $0$ & $1$ & $0$ & $0$\\
 & 12 & $0$ & $0$ & $0$ & $0$ & $0$ & $0$ & $0$ & $0$ & $0$ & $0$ & $0$ & $0$ & $0$ & $0$ & $0$ & $0$\\
 & 13 & $0$ & $0$ & $0$ & $0$ & $1$ & $1$ & $0$ & $0$ & $0$ & $0$ & $0$ & $0$ & $0$ & $0$ & $0$ & $0$\\
 & 14 & $0$ & $0$ & $0$ & $0$ & $0$ & $0$ & $0$ & $0$ & $0$ & $0$ & $0$ & $0$ & $0$ & $0$ & $0$ & $0$\\
 & 15 & $0$ & $0$ & $0$ & $0$ & $0$ & $0$ & $0$ & $0$ & $0$ & $0$ & $0$ & $0$ & $0$ & $-1$ & $0$ & $0$\\
 & 16 & $0$ & $0$ & $0$ & $0$ & $0$ & $0$ & $0$ & $0$ & $0$ & $0$ & $0$ & $0$ & $0$ & $0$ & $0$ & $0$\\
 & 17 & $0$ & $0$ & $0$ & $0$ & $0$ & $0$ & $0$ & $0$ & $-1$ & $0$ & $0$ & $0$ & $0$ & $0$ & $0$ & $0$\\
 & 18 & $0$ & $-1$ & $0$ & $1$ & $0$ & $0$ & $-1$ & $0$ & $0$ & $0$ & $0$ & $0$ & $0$ & $0$ & $0$ & $0$\\
 & 19 & $0$ & $0$ & $0$ & $0$ & $0$ & $0$ & $0$ & $0$ & $1$ & $0$ & $0$ & $0$ & $0$ & $0$ & $0$ & $0$\\
 & 20 & $0$ & $0$ & $0$ & $0$ & $0$ & $0$ & $0$ & $0$ & $0$ & $0$ & $0$ & $0$ & $0$ & $0$ & $0$ & $0$\\
\cmidrule{2-18}
& & 17 & 18 & 19 & 20 & 21 & 22 & 23 & 24 & 25 & 26 & 27 & 28 & 29 & 30 & 31 & 32\\
\cmidrule{2-18}
& 1 & $0$ & $0$ & $0$ & $0$ & $0$ & $0$ & $0$ & $1$ & $1$ & $0$ & $0$ & $0$ & $0$ & $0$ & $1$ & $0$\\
 & 2 & $0$ & $0$ & $0$ & $0$ & $0$ & $0$ & $0$ & $0$ & $0$ & $0$ & $-1$ & $0$ & $1$ & $0$ & $-1/a$ & $0$\\
 & 3 & $0$ & $0$ & $0$ & $0$ & $0$ & $0$ & $0$ & $1$ & $1$ & $0$ & $-1$ & $0$ & $0$ & $0$ & $0$ & $0$\\
 & 4 & $0$ & $0$ & $0$ & $0$ & $0$ & $0$ & $0$ & $-1$ & $-1$ & $0$ & $0$ & $0$ & $0$ & $0$ & $0$ & $0$\\
 & 5 & $-1$ & $-1$ & $0$ & $0$ & $0$ & $0$ & $0$ & $-1$ & $-1$ & $0$ & $0$ & $0$ & $0$ & $0$ & $0$ & $0$\\
 & 6 & $0$ & $0$ & $0$ & $0$ & $0$ & $0$ & $0$ & $1$ & $0$ & $-a$ & $0$ & $0$ & $0$ & $0$ & $0$ & $0$\\
 & 7 & $0$ & $0$ & $0$ & $0$ & $0$ & $0$ & $0$ & $0$ & $0$ & $1$ & $-1$ & $0$ & $0$ & $0$ & $0$ & $0$\\
 & 8 & $0$ & $0$ & $0$ & $0$ & $0$ & $0$ & $0$ & $1$ & $0$ & $0$ & $-1$ & $0$ & $0$ & $0$ & $0$ & $0$\\
 & 9 & $0$ & $0$ & $1$ & $1$ & $0$ & $0$ & $0$ & $0$ & $0$ & $0$ & $1$ & $0$ & $0$ & $1$ & $0$ & $0$\\
 & 10 & $0$ & $0$ & $0$ & $0$ & $0$ & $0$ & $0$ & $0$ & $0$ & $0$ & $1$ & $0$ & $0$ & $0$ & $0$ & $0$\\
 & 11 & $0$ & $0$ & $0$ & $0$ & $0$ & $0$ & $0$ & $0$ & $0$ & $0$ & $0$ & $0$ & $0$ & $1$ & $1$ & $0$\\
 & 12 & $0$ & $0$ & $0$ & $-1$ & $0$ & $1$ & $0$ & $0$ & $0$ & $-1$ & $0$ & $1$ & $0$ & $0$ & $0$ & $0$\\
 & 13 & $0$ & $0$ & $-1$ & $0$ & $0$ & $0$ & $0$ & $-1$ & $0$ & $0$ & $-1$ & $0$ & $0$ & $0$ & $0$ & $0$\\
 & 14 & $0$ & $0$ & $0$ & $0$ & $0$ & $0$ & $0$ & $0$ & $0$ & $0$ & $0$ & $0$ & $0$ & $-1$ & $0$ & $0$\\
 & 15 & $0$ & $-1$ & $0$ & $0$ & $0$ & $0$ & $1$ & $0$ & $0$ & $0$ & $0$ & $0$ & $0$ & $-1$ & $0$ & $0$\\
 & 16 & $0$ & $1$ & $0$ & $0$ & $0$ & $0$ & $0$ & $0$ & $0$ & $0$ & $0$ & $0$ & $0$ & $0$ & $-1$ & $0$\\
 & 17 & $0$ & $0$ & $0$ & $0$ & $1$ & $0$ & $0$ & $0$ & $0$ & $-1$ & $0$ & $1$ & $0$ & $0$ & $0$ & $0$\\
 & 18 & $0$ & $0$ & $0$ & $0$ & $0$ & $0$ & $0$ & $-1$ & $0$ & $0$ & $1$ & $0$ & $0$ & $0$ & $0$ & $0$\\
 & 19 & $0$ & $0$ & $0$ & $-1$ & $-1$ & $1$ & $0$ & $0$ & $0$ & $0$ & $0$ & $0$ & $0$ & $-1$ & $0$ & $-1$\\
 & 20 & $0$ & $0$ & $0$ & $0$ & $0$ & $0$ & $0$ & $0$ & $0$ & $0$ & $0$ & $0$ & $0$ & $0$ & $0$ & $0$\\
\bottomrule
\end{tabular}
\end{minipage}}
\end{table}

\begin{table}[ht]
\centering
\caption{Matrix $U$ of the three-parameter family of $\mathrm{PD}_{47}(T_{345})$s with parameters $a$, $b$, and $c$, part 2.}
\label{tab:param_R47_U_2}
\resizebox{\textwidth}{!}{\begin{minipage}{\textwidth}
\begin{tabular}{c c | c c c c c c c c c c c c c c c c c}
 & &  33 & 34 & 35 & 36 & 37 & 38 & 39 & 40 & 41 & 42 & 43 & 44 & 45 & 46 & 47 \\
\cmidrule{1-17}
 \multirow{20}{*}{\begin{sideways}$U(:,33:47)$ \end{sideways}}  
& 1 & $0$ & $0$ & $0$ & $0$ & $0$ & $0$ & $0$ & $0$ & $0$ & $0$ & $0$ & $0$ & $0$ & $0$ & $0$\\
 & 2 & $0$ & $0$ & $0$ & $0$ & $0$ & $0$ & $0$ & $0$ & $0$ & $0$ & $0$ & $0$ & $0$ & $0$ & $0$\\
 & 3 & $0$ & $0$ & $0$ & $0$ & $0$ & $0$ & $0$ & $0$ & $0$ & $0$ & $0$ & $0$ & $0$ & $0$ & $0$\\
 & 4 & $0$ & $1$ & $0$ & $0$ & $0$ & $0$ & $0$ & $0$ & $0$ & $0$ & $0$ & $0$ & $0$ & $0$ & $0$\\
 & 5 & $0$ & $0$ & $-1$ & $0$ & $0$ & $0$ & $0$ & $0$ & $0$ & $0$ & $0$ & $0$ & $0$ & $0$ & $0$\\
 & 6 & $0$ & $0$ & $0$ & $0$ & $0$ & $0$ & $0$ & $0$ & $0$ & $0$ & $0$ & $0$ & $0$ & $0$ & $-1$\\
 & 7 & $0$ & $0$ & $0$ & $0$ & $0$ & $0$ & $0$ & $0$ & $0$ & $1$ & $0$ & $0$ & $0$ & $0$ & $0$\\
 & 8 & $0$ & $0$ & $0$ & $0$ & $0$ & $0$ & $0$ & $0$ & $0$ & $1$ & $0$ & $0$ & $0$ & $0$ & $0$\\
 & 9 & $0$ & $0$ & $0$ & $1$ & $0$ & $0$ & $0$ & $0$ & $0$ & $-1$ & $0$ & $0$ & $0$ & $0$ & $0$\\
 & 10 & $1$ & $0$ & $0$ & $0$ & $0$ & $0$ & $0$ & $0$ & $0$ & $-1$ & $0$ & $0$ & $0$ & $0$ & $0$\\
 & 11 & $0$ & $0$ & $0$ & $0$ & $0$ & $0$ & $0$ & $0$ & $0$ & $0$ & $1$ & $0$ & $1$ & $0$ & $0$\\
 & 12 & $0$ & $-1$ & $0$ & $0$ & $0$ & $0$ & $0$ & $1$ & $0$ & $0$ & $0$ & $0$ & $0$ & $0$ & $0$\\
 & 13 & $0$ & $0$ & $0$ & $0$ & $0$ & $0$ & $0$ & $0$ & $-1$ & $0$ & $0$ & $0$ & $0$ & $0$ & $0$\\
 & 14 & $0$ & $1$ & $0$ & $0$ & $0$ & $0$ & $0$ & $0$ & $1$ & $0$ & $0$ & $0$ & $0$ & $0$ & $0$\\
 & 15 & $0$ & $0$ & $-1$ & $0$ & $0$ & $0$ & $1$ & $0$ & $0$ & $0$ & $0$ & $1$ & $-1$ & $-1$ & $0$\\
 & 16 & $1$ & $0$ & $0$ & $0$ & $0$ & $1$ & $-1$ & $0$ & $0$ & $0$ & $-1$ & $0$ & $0$ & $0$ & $0$\\
 & 17 & $0$ & $0$ & $-1$ & $0$ & $0$ & $0$ & $0$ & $0$ & $0$ & $0$ & $0$ & $0$ & $0$ & $0$ & $0$\\
 & 18 & $0$ & $0$ & $0$ & $0$ & $0$ & $0$ & $0$ & $0$ & $0$ & $0$ & $0$ & $1$ & $0$ & $0$ & $0$\\
 & 19 & $0$ & $0$ & $1$ & $0$ & $-1$ & $0$ & $0$ & $0$ & $1$ & $0$ & $0$ & $0$ & $0$ & $0$ & $0$\\
 & 20 & $-1$ & $0$ & $1$ & $0$ & $0$ & $0$ & $0$ & $0$ & $0$ & $0$ & $0$ & $-1$ & $0$ & $0$ & $0$\\
\bottomrule
\end{tabular}
\end{minipage}}
\end{table}

\begin{table}[ht]
\centering
\caption{Matrix $V$ of the three-parameter family of $\mathrm{PD}_{47}(T_{345})$s with parameters $a$, $b$, and $c$.}
\label{tab:param_R47_V}
\resizebox{\textwidth}{!}{\begin{minipage}{\textwidth}
\begin{tabular}{c c | c c c c c c c c c c c c c c c c c}
 & & 1 & 2 & 3 & 4 & 5 & 6 & 7 & 8 & 9 & 10 & 11 & 12 & 13 & 14 & 15 & 16 \\
 \cmidrule{2-18}
\multirow{38}{*}{\begin{sideways}$V$ \end{sideways}}  & 1 & $0$ & $0$ & $0$ & $1$ & $-1$ & $1$ & $1$ & $0$ & $0$ & $1$ & $0$ & $0$ & $0$ & $-1$ & $0$ & $1$\\
 & 2 & $0$ & $0$ & $0$ & $0$ & $0$ & $0$ & $0$ & $-1$ & $-1$ & $-1$ & $-1$ & $0$ & $0$ & $0$ & $0$ & $0$\\
 & 3 & $0$ & $0$ & $0$ & $0$ & $1$ & $-1$ & $0$ & $0$ & $0$ & $0$ & $0$ & $0$ & $0$ & $1$ & $0$ & $-1$\\
 & 4 & $0$ & $0$ & $0$ & $1$ & $0$ & $0$ & $1$ & $0$ & $0$ & $0$ & $0$ & $0$ & $0$ & $0$ & $0$ & $0$\\
 & 5 & $0$ & $0$ & $-1$ & $-1$ & $1$ & $-1$ & $-1$ & $0$ & $0$ & $-1$ & $0$ & $0$ & $0$ & $0$ & $0$ & $0$\\
 & 6 & $-1$ & $1$ & $0$ & $0$ & $0$ & $1$ & $-1$ & $0$ & $0$ & $0$ & $0$ & $0$ & $-1$ & $0$ & $0$ & $0$\\
 & 7 & $0$ & $0$ & $0$ & $0$ & $0$ & $1$ & $0$ & $0$ & $0$ & $0$ & $0$ & $0$ & $0$ & $0$ & $0$ & $0$\\
 & 8 & $0$ & $0$ & $0$ & $0$ & $0$ & $0$ & $-1$ & $0$ & $0$ & $0$ & $0$ & $0$ & $0$ & $0$ & $0$ & $0$\\
 & 9 & $0$ & $0$ & $0$ & $0$ & $0$ & $0$ & $0$ & $0$ & $0$ & $0$ & $1$ & $1$ & $1$ & $1$ & $0$ & $0$\\
 & 10 & $0$ & $1$ & $0$ & $0$ & $0$ & $1$ & $-1$ & $0$ & $1$ & $0$ & $0$ & $0$ & $-1$ & $0$ & $1$ & $0$\\
 & 11 & $0$ & $0$ & $0$ & $0$ & $0$ & $1$ & $0$ & $0$ & $0$ & $0$ & $0$ & $0$ & $0$ & $0$ & $0$ & $0$\\
 & 12 & $0$ & $1$ & $0$ & $0$ & $0$ & $0$ & $-1$ & $0$ & $1$ & $0$ & $0$ & $0$ & $0$ & $0$ & $1$ & $0$\\
\cmidrule{2-18}
& & 17 & 18 & 19 & 20 & 21 & 22 & 23 & 24 & 25 & 26 & 27 & 28 & 29 & 30 & 31 & 32\\
\cmidrule{2-18}
 & 1 & $1$ & $1$ & $0$ & $0$ & $0$ & $0$ & $0$ & $0$ & $0$ & $0$ & $1$ & $0$ & $1$ & $0$ & $1$ & $0$\\
 & 2 & $0$ & $0$ & $0$ & $-1$ & $-1$ & $-1$ & $0$ & $0$ & $0$ & $-1$ & $0$ & $\frac{-1}{b}$ & $-1$ & $0$ & $0$ & $0$\\
 & 3 & $0$ & $0$ & $0$ & $0$ & $0$ & $0$ & $0$ & $0$ & $0$ & $0$ & $0$ & $0$ & $0$ & $0$ & $0$ & $0$\\
 & 4 & $1$ & $1$ & $0$ & $0$ & $-1$ & $-1$ & $0$ & $0$ & $0$ & $0$ & $0$ & $\frac{-1}{b}$ & $0$ & $0$ & $0$ & $0$\\
 & 5 & $0$ & $0$ & $0$ & $0$ & $0$ & $0$ & $0$ & $0$ & $-1$ & $0$ & $-1$ & $0$ & $0$ & $0$ & $0$ & $0$\\
 & 6 & $0$ & $0$ & $-1$ & $0$ & $0$ & $0$ & $0$ & $1$ & $1$ & $0$ & $0$ & $0$ & $0$ & $0$ & $0$ & $0$\\
 & 7 & $0$ & $0$ & $0$ & $0$ & $0$ & $b$ & $1$ & $0$ & $0$ & $0$ & $0$ & $0$ & $0$ & $0$ & $0$ & $b$\\
 & 8 & $0$ & $0$ & $0$ & $0$ & $0$ & $-b$ & $1$ & $0$ & $0$ & $0$ & $0$ & $-1$ & $0$ & $0$ & $0$ & $-b$\\
 & 9 & $0$ & $-1$ & $0$ & $0$ & $0$ & $0$ & $0$ & $0$ & $-1$ & $0$ & $0$ & $0$ & $0$ & $0$ & $-1$ & $0$\\
 & 10 & $0$ & $0$ & $-1$ & $1$ & $1$ & $1$ & $0$ & $1$ & $1$ & $1$ & $0$ & $\frac{1}{b}$ & $0$ & $0$ & $0$ & $0$\\
 & 11 & $0$ & $0$ & $-1$ & $1$ & $0$ & $0$ & $0$ & $0$ & $0$ & $0$ & $0$ & $0$ & $0$ & $1$ & $0$ & $-1$\\
 & 12 & $0$ & $0$ & $0$ & $0$ & $1$ & $1$ & $0$ & $0$ & $0$ & $0$ & $0$ & $\frac{1}{b}$ & $0$ & $0$ & $0$ & $1$\\
\cmidrule{2-18}
 & &  33 & 34 & 35 & 36 & 37 & 38 & 39 & 40 & 41 & 42 & 43 & 44 & 45 & 46 & 47 \\
 \cmidrule{2-17}
& 1 & $0$ & $0$ & $0$ & $0$ & $0$ & $0$ & $c$ & $0$ & $0$ & $1$ & $1$ & $0$ & $1$ & $0$ & $0$\\
 & 2 & $0$ & $0$ & $0$ & $0$ & $0$ & $0$ & $0$ & $0$ & $0$ & $-1$ & $0$ & $0$ & $0$ & $0$ & $0$\\
 & 3 & $0$ & $1$ & $0$ & $0$ & $0$ & $1$ & $-c$ & $1$ & $0$ & $0$ & $-1$ & $0$ & $-1$ & $1$ & $0$\\
 & 4 & $0$ & $0$ & $1$ & $0$ & $0$ & $1$ & $0$ & $-1$ & $0$ & $0$ & $0$ & $0$ & $0$ & $1$ & $0$\\
 & 5 & $0$ & $0$ & $0$ & $0$ & $0$ & $0$ & $0$ & $0$ & $0$ & $-1$ & $0$ & $0$ & $0$ & $0$ & $0$\\
 & 6 & $0$ & $0$ & $0$ & $0$ & $0$ & $0$ & $0$ & $0$ & $0$ & $1$ & $0$ & $0$ & $0$ & $0$ & $0$\\
 & 7 & $0$ & $0$ & $0$ & $0$ & $0$ & $\frac{1}{c}$ & $-1$ & $b$ & $1$ & $0$ & $\frac{-1}{c}$ & $0$ & $0$ & $0$ & $0$\\
 & 8 & $0$ & $0$ & $0$ & $0$ & $0$ & $\frac{1}{c}$ & $-1$ & $-b$ & $0$ & $0$ & $0$ & $1$ & $0$ & $0$ & $0$\\
 & 9 & $0$ & $0$ & $0$ & $0$ & $0$ & $0$ & $-c$ & $0$ & $0$ & $0$ & $-1$ & $0$ & $-1$ & $0$ & $1$\\
 & 10 & $0$ & $0$ & $0$ & $1$ & $0$ & $0$ & $0$ & $0$ & $0$ & $0$ & $0$ & $0$ & $0$ & $0$ & $-1$\\
 & 11 & $0$ & $0$ & $0$ & $1$ & $1$ & $-1$ & $c$ & $-1$ & $0$ & $0$ & $1$ & $0$ & $1$ & $0$ & $0$\\
 & 12 & $1$ & $0$ & $0$ & $0$ & $-1$ & $-1$ & $0$ & $1$ & $0$ & $0$ & $0$ & $0$ & $0$ & $0$ & $0$\\
\bottomrule
\end{tabular}
\end{minipage}}
\end{table}

\begin{table}[htbp]
\centering
\caption{Matrix $W$ of the three-parameter family of $\mathrm{PD}_{47}(T_{345})$s with parameters $a$, $b$, and $c$.}
\label{tab:param_R47_W}
\resizebox{\textwidth}{!}{\begin{minipage}{\textwidth}
\begin{tabular}{c c | c c c c c c c c c c c c c c c c c}
 & & 1 & 2 & 3 & 4 & 5 & 6 & 7 & 8 & 9 & 10 & 11 & 12 & 13 & 14 & 15 & 16 \\
 \cmidrule{2-18}
\multirow{48}{*}{\begin{sideways}$W$ \end{sideways}}   & 1 & $0$ & $0$ & $0$ & $0$ & $0$ & $0$ & $0$ & $0$ & $0$ & $0$ & $-1$ & $1$ & $0$ & $1$ & $0$ & $1$\\
 & 2 & $0$ & $0$ & $1$ & $0$ & $0$ & $0$ & $0$ & $0$ & $0$ & $0$ & $0$ & $0$ & $-1$ & $0$ & $0$ & $0$\\
 & 3 & $0$ & $0$ & $0$ & $0$ & $0$ & $0$ & $0$ & $0$ & $0$ & $0$ & $0$ & $1$ & $0$ & $1$ & $0$ & $1$\\
 & 4 & $0$ & $0$ & $0$ & $0$ & $0$ & $0$ & $0$ & $1$ & $-1$ & $0$ & $0$ & $0$ & $0$ & $0$ & $1$ & $0$\\
 & 5 & $1$ & $0$ & $0$ & $0$ & $0$ & $0$ & $0$ & $0$ & $0$ & $1$ & $0$ & $0$ & $0$ & $0$ & $0$ & $0$\\
 & 6 & $0$ & $0$ & $0$ & $0$ & $0$ & $0$ & $0$ & $1$ & $-1$ & $0$ & $a$ & $0$ & $0$ & $0$ & $1$ & $0$\\
 & 7 & $0$ & $0$ & $-1$ & $1$ & $1$ & $0$ & $0$ & $-1$ & $0$ & $1$ & $0$ & $0$ & $0$ & $0$ & $0$ & $1$\\
 & 8 & $-1$ & $1$ & $-1$ & $1$ & $1$ & $1$ & $1$ & $0$ & $0$ & $0$ & $0$ & $0$ & $0$ & $0$ & $-1$ & $1$\\
 & 9 & $1$ & $-1$ & $0$ & $0$ & $0$ & $0$ & $0$ & $0$ & $0$ & $0$ & $0$ & $-1$ & $1$ & $0$ & $1$ & $0$\\
 & 10 & $0$ & $0$ & $0$ & $0$ & $0$ & $0$ & $0$ & $0$ & $0$ & $0$ & $0$ & $0$ & $0$ & $0$ & $0$ & $1$\\
 & 11 & $0$ & $0$ & $0$ & $0$ & $1$ & $1$ & $0$ & $0$ & $0$ & $0$ & $0$ & $0$ & $0$ & $0$ & $0$ & $1$\\
 & 12 & $0$ & $0$ & $0$ & $0$ & $0$ & $0$ & $0$ & $0$ & $0$ & $0$ & $0$ & $0$ & $0$ & $1$ & $0$ & $1$\\
 & 13 & $0$ & $0$ & $0$ & $0$ & $0$ & $0$ & $0$ & $0$ & $-1$ & $0$ & $0$ & $0$ & $0$ & $0$ & $1$ & $0$\\
 & 14 & $0$ & $1$ & $0$ & $1$ & $0$ & $0$ & $1$ & $0$ & $0$ & $0$ & $0$ & $0$ & $0$ & $0$ & $-1$ & $0$\\
 & 15 & $0$ & $0$ & $0$ & $0$ & $0$ & $0$ & $0$ & $0$ & $0$ & $0$ & $0$ & $0$ & $0$ & $0$ & $1$ & $0$\\
\cmidrule{2-18}
& & 17 & 18 & 19 & 20 & 21 & 22 & 23 & 24 & 25 & 26 & 27 & 28 & 29 & 30 & 31 & 32\\
\cmidrule{2-18}
 & 1 & $-1$ & $1$ & $0$ & $0$ & $0$ & $0$ & $0$ & $0$ & $0$ & $0$ & $0$ & $0$ & $\frac{1}{a}$ & $0$ & $1$ & $0$\\
 & 2 & $0$ & $0$ & $0$ & $0$ & $0$ & $0$ & $1$ & $0$ & $-1$ & $0$ & $0$ & $0$ & $0$ & $0$ & $0$ & $0$\\
 & 3 & $-1$ & $1$ & $0$ & $0$ & $0$ & $0$ & $\frac{1}{c} $ & $0$ & $0$ & $0$ & $0$ & $0$ & $0$ & $0$ & $0$ & $0$\\
 & 4 & $0$ & $0$ & $0$ & $-1$ & $-1$ & $-1$ & $0$ & $0$ & $0$ & $0$ & $0$ & $0$ & $1$ & $0$ & $0$ & $-1$\\
 & 5 & $0$ & $0$ & $0$ & $0$ & $\frac{1}{b}$ & $\frac{1}{b}$ & $0$ & $0$ & $0$ & $0$ & $0$ & $-1$ & $1$ & $0$ & $0$ & $\frac{1}{b}$\\
 & 6 & $0$ & $0$ & $0$ & $-1$ & $0$ & $0$ & $0$ & $0$ & $0$ & $1$ & $0$ & $0$ & $0$ & $0$ & $0$ & $0$\\
 & 7 & $-1$ & $0$ & $0$ & $0$ & $0$ & $0$ & $0$ & $0$ & $0$ & $0$ & $-1$ & $0$ & $0$ & $0$ & $0$ & $0$\\
 & 8 & $-1$ & $0$ & $-1$ & $0$ & $0$ & $0$ & $0$ & $0$ & $0$ & $0$ & $0$ & $0$ & $0$ & $0$ & $0$ & $0$\\
 & 9 & $0$ & $0$ & $1$ & $0$ & $0$ & $0$ & $0$ & $1$ & $0$ & $0$ & $0$ & $0$ & $0$ & $0$ & $0$ & $0$\\
 & 10 & $0$ & $0$ & $0$ & $-1$ & $0$ & $-1$ & $0$ & $0$ & $0$ & $0$ & $0$ & $0$ & $0$ & $0$ & $0$ & $-1$\\
 & 11 & $0$ & $0$ & $-1$ & $0$ & $0$ & $0$ & $0$ & $0$ & $0$ & $0$ & $0$ & $0$ & $0$ & $0$ & $0$ & $\frac{1}{b}$\\
 & 12 & $0$ & $0$ & $0$ & $0$ & $0$ & $0$ & $0$ & $0$ & $0$ & $0$ & $0$ & $0$ & $0$ & $-1$ & $0$ & $0$\\
 & 13 & $-1$ & $0$ & $0$ & $0$ & $-1$ & $0$ & $0$ & $0$ & $0$ & $0$ & $0$ & $0$ & $0$ & $0$ & $0$ & $0$\\
 & 14 & $-1$ & $0$ & $0$ & $0$ & $0$ & $0$ & $1$ & $0$ & $0$ & $0$ & $0$ & $0$ & $0$ & $0$ & $0$ & $0$\\
 & 15 & $-1$ & $1$ & $0$ & $0$ & $0$ & $0$ & $\frac{1}{c} $ & $0$ & $0$ & $0$ & $0$ & $0$ & $0$ & $0$ & $0$ & $0$\\
\cmidrule{2-18}
 & &  33 & 34 & 35 & 36 & 37 & 38 & 39 & 40 & 41 & 42 & 43 & 44 & 45 & 46 & 47 \\
 \cmidrule{2-18}
  & 1 & $0$ & $0$ & $0$ & $0$ & $0$ & $0$ & $0$ & $0$ & $0$ & $0$ & $0$ & $0$ & $0$ & $-1$ & $0$\\
 & 2 & $0$ & $0$ & $0$ & $0$ & $0$ & $0$ & $1$ & $0$ & $0$ & $0$ & $-c$ & $0$ & $c$ & $0$ & $-1$\\
 & 3 & $0$ & $0$ & $0$ & $0$ & $0$ & $-1$ & $\frac{1}{c}$ & $0$ & $0$ & $0$ & $0$ & $0$ & $1$ & $-1$ & $1$\\
 & 4 & $0$ & $0$ & $0$ & $1$ & $0$ & $0$ & $0$ & $1$ & $0$ & $0$ & $0$ & $0$ & $0$ & $0$ & $0$\\
 & 5 & $0$ & $0$ & $0$ & $0$ & $\frac{1}{b}$ & $0$ & $0$ & $0$ & $0$ & $1$ & $0$ & $0$ & $0$ & $0$ & $0$\\
 & 6 & $0$ & $0$ & $0$ & $1$ & $1$ & $0$ & $0$ & $0$ & $0$ & $0$ & $0$ & $0$ & $0$ & $0$ & $a$\\
 & 7 & $0$ & $0$ & $0$ & $0$ & $0$ & $0$ & $0$ & $0$ & $0$ & $0$ & $0$ & $0$ & $0$ & $0$ & $0$\\
 & 8 & $0$ & $0$ & $0$ & $-1$ & $0$ & $0$ & $0$ & $0$ & $0$ & $0$ & $0$ & $0$ & $0$ & $0$ & $0$\\
 & 9 & $0$ & $0$ & $0$ & $1$ & $0$ & $0$ & $0$ & $0$ & $0$ & $0$ & $0$ & $0$ & $0$ & $0$ & $0$\\
 & 10 & $0$ & $1$ & $0$ & $1$ & $0$ & $0$ & $0$ & $1$ & $0$ & $0$ & $0$ & $0$ & $0$ & $0$ & $0$\\
 & 11 & $0$ & $0$ & $0$ & $-1$ & $\frac{1}{b}$ & $0$ & $0$ & $0$ & $1$ & $0$ & $0$ & $0$ & $0$ & $0$ & $0$\\
 & 12 & $0$ & $0$ & $0$ & $1$ & $1$ & $0$ & $0$ & $0$ & $0$ & $0$ & $0$ & $0$ & $1$ & $0$ & $0$\\
 & 13 & $0$ & $0$ & $1$ & $0$ & $0$ & $0$ & $0$ & $0$ & $0$ & $0$ & $0$ & $0$ & $0$ & $-1$ & $0$\\
 & 14 & $0$ & $0$ & $0$ & $0$ & $0$ & $0$ & $0$ & $0$ & $0$ & $0$ & $0$ & $-1$ & $0$ & $0$ & $0$\\
 & 15 & $-1$ & $0$ & $0$ & $0$ & $0$ & $-1$ & $\frac{1}{c} $ & $0$ & $0$ & $0$ & $0$ & $0$ & $0$ & $-1$ & $0$\\
\bottomrule
\end{tabular}
\end{minipage}}
\end{table}

\paragraph*{Rank 49 of \mbox{\boldmath $T_{444}$}.}
In \cite{DeepMind_2022}, $14000$ non inv-equivalent discrete PDs of rank 49 are obtained with elements in $\lbrace 0, \pm 1, \pm 2 \rbrace$ using deep reinforcement learning. However, as explained at the beginning of this section, it is possible that some of these PDs belong to the same parameter family. Among those $14000$ PDs, they obtained a more stable PD compared to $\mathrm{PD}_{\mathrm{Strassen}}^{\mathrm{rec}}$. More specifically, they improved the stability and prefactor \eqref{eq:e_q} from $q=24$ and $e=144$ to $q=23$ and $e=136$. We obtain the
13-parameter family of PDs in \Cref{tab:param_R49_U,tab:param_R49_V,tab:param_R49_W} in the same way as the nine-parameter family in \Cref{tab:9_param_Smirnov} by using the most stable PD of rank 49 from DeepMind in the definition of the starting point.
Using this parametrization, we improve the pre- and stability factor to $q=22$ and $e=99$ for $(b,f,g,h,p,t):= \frac{1}{2}$, $(c,d,n,s,w):=1$, $a:=2$, and $v:=4$. The rank of the Jacobian at this PD is 2198.

In \Cref{tab:5param_R49_S13_A,tab:5param_R49_S13_BCD}, the submatrices $A$, $B$, $C$, and $D$ of a five-parameter family of CS PDs with $S=13$ and $T=12$ are given. Up to our knowledge, this is the first (parameter family of) CS PD(s) known for this combination of $S$ and $T$ for $T_{444}$. This PD is however not as sparse as the PD in \Cref{tab:param_R49_U,tab:param_R49_V,tab:param_R49_W} and thus can not be used to improve the stability but it may help to understand the set of all PDs of $T_{444}$. We obtain this family in the same way as the non-discretizable two-parameter family in \Cref{tab:non-discr_R23}. The rank of the Jacobian at a generic PD (randomly generated parameters) in this parametrization is 2182. 

\begin{table}[htbp]
\centering
\caption{Matrix $U$ of the 13-parameter family of $\mathrm{PD}_{49}(T_{444})$s with parameters $a$, $b$, $c$, $d$, $f$, $g$, $h$, $n$, $p$, $s$, $t$, $v$, and $w$.}
\label{tab:param_R49_U}
\resizebox{\textwidth}{!}{\begin{minipage}{\textwidth}
\begin{tabular}{c c | c c c c c c c c c c c c c c c c c}
\multirow{50}{*}{\begin{sideways}$U$ \end{sideways}} & & 1 & 2 & 3 & 4 & 5 & 6 & 7 & 8 & 9 & 10 & 11 & 12 & 13 & 14 & 15 & 16 \\
\cmidrule{1-18}
& 1 & $0$ & $0$ & $0$ & $0$ & $1$ & $0$ & $0$ & $0$ & $0$ & $0$ & $0$ & $0$ & $0$ & $0$ & $0$ & $0$  \\
& 2 & $1$ & $0$ & $0$ & $0$ & $p$ & $0$ & $0$ & $0$ & $0$ & $0$ & $0$ & $0$ & $0$ & $0$ & $0$ & $0$ \\  
& 3 & $0$ & $\frac{1}{d}$ & $0$ & $0$ & $0$ & $0$ & $0$ & $0$ & $0$ & $0$ & $1$ & $0$ & $0$ & $0$ & $0$ & $0$ \\ 
& 4 & $0$ & $0$ & $\frac{w}{n}$ & $0$ & $0$ & $0$ & $\frac{1}{n}$ & $\frac{1}{n}$ & $0$ & $0$ & $0$ & $0$ & $0$ & $0$ & $\frac{w}{n}$ & $0$ \\
& 5 & $0$ & $0$ & $0$ & $0$ & $0$ & $0$ & $0$ & $\frac{f}{pw}$ & $\frac{1}{w}$ & $0$ & $0$ & $0$ & $0$ & $1$ & $0$ & $0$ \\
& 6 & $0$ & $0$ & $f$ & $0$ & $0$ & $0$ & $0$ & $0$ & $0$ & $0$ & $0$ & $0$ & $0$ & $0$ & $0$ & $0$ \\
& 7 & $0$ & $0$ & $0$ & $\frac{1}{w}$ & $0$ & $0$ & $0$ & $0$ & $0$ & $\frac{1}{wc}$ & $0$ & $0$ & $\frac{1}{w}$ & $0$ & $fa$ & $0$ \\
& 8 & $0$ & $0$ & $1$ & $0$ & $0$ & $0$ & $0$ & $0$ & $0$ & $0$ & $0$ & $0$ & $0$ & $0$ & $1$ & $0$ \\
& 9 & $0$ & $0$ & $0$ & $0$ & $0$ & $0$ & $0$ & $\frac{f}{p}$ & $1$ & $\frac{-1}{cap}$ & $0$ & $1$ & $0$ & $0$ & $0$ & $0$ \\
& 10 & $0$ & $0$ & $0$ & $0$ & $0$ & $f$ & $f$ & $f$ & $p$ & $\frac{-1}{ac}$ & $\frac{d}{ac}$ & $p$ & $\frac{-1}{a}$ & $0$ & $-fw$ & $0$ \\
& 11 & $0$ & $0$ & $0$ & $0$ & $0$ & $0$ & $0$ & $0$ & $0$ & $0$ & $0$ & $0$ & $1$ & $0$ & $fwa$ & $0$ \\
& 12 & $0$ & $0$ & $0$ & $0$ & $0$ & $1$ & $1$ & $1$ & $0$ & $0$ & $\frac{d}{afc}$ & $0$ & $\frac{-1}{fa}$ & $0$ & $0$ & $0$\\
& 13 & $0$ & $0$ & $0$ & $0$ & $-d$ & $0$ & $0$ & $0$ & $0$ & $\frac{-1}{ap}$ & $0$ & $c$ & $0$ & $0$ & $0$ & $0$ \\
& 14 & $0$ & $\frac{1}{a}$ & $0$ & $\frac{c}{a}$ & $0$ & $0$ & $0$ & $0$ & $0$ & $0$ & $0$ & $0$ & $0$ & $0$ & $0$ & $0$ \\
& 15 & $0$ & $-1$ & $0$ & $-c$ & $0$ & $0$ & $0$ & $0$ & $0$ & $-1$ & $-d$ & $0$ & $0$ & $0$ & $0$ & $0$ \\
& 16 & $0$ & $0$ & $0$ & $0$ & $0$ & $-c$ & $0$ & $0$ & $0$ & $0$ & $\frac{-d}{af}$ & $0$ & $\frac{c}{fa}$ & $0$ & $0$ & $1$ \\
\cmidrule{2-18}
& & 17 & 18 & 19 & 20 & 21 & 22 & 23 & 24 & 25 & 26 & 27 & 28 & 29 & 30 & 31 & 32\\
\cmidrule{2-18}
& 1 & $0$ & $0$ & $0$ & $1$ & $0$ & $0$ & $0$ & $0$ & $0$ & $0$ & $0$ & $\frac{w}{n}$ & $\frac{w}{np}$ & $0$ & $0$ & $\frac{1}{pd}$  \\
& 2 & $0$ & $0$ & $0$ & $p$ & $0$ & $0$ & $0$ & $0$ & $0$ & $0$ & $\frac{1}{n}$ & $\frac{pw}{n}$ & $\frac{w}{n}$ & $0$ & $0$ & $\frac{1}{d}$ \\
& 3 & $0$ & $0$ & $0$ & $0$ & $0$ & $0$ & $0$ & $0$ & $0$ & $0$ & $\frac{-a}{n}$ & $0$ & $0$ & $0$ & $0$ & $0$\\
& 4 & $0$ & $0$ & $\frac{1}{nb}$ & $0$ & $0$ & $0$ & $0$ & $0$ & $0$ & $0$ & $\frac{1}{nf}$ & $0$ & $0$ & $0$ & $0$ & $0$ \\
& 5 & $0$ & $0$ & $0$ & $0$ & $0$ & $0$ & $0$ & $0$ & $0$ & $0$ & $0$ & $1$ & $\frac{1}{p}$ & $0$ & $\frac{-f}{p}$ & $0$ \\
& 6 & $0$ & $f$ & $0$ & $0$ & $0$ & $f$ & $0$ & $0$ & $1$ & $0$ & $\frac{1}{w}$ & $0$ & $1$ & $0$ & $0$ & $\frac{1}{wc}$ \\
& 7 & $0$ & $0$ & $0$ & $0$ & $0$ & $0$ & $1$ & $0$ & $0$ & $\frac{1}{w}$ & $0$ & $0$ & $-a$ & $0$ & $0$ & $0$ \\
& 8 & $0$ & $1$ & $0$ & $0$ & $0$ & $1$ & $0$ & $0$ & $0$ & $0$ & $\frac{1}{wf}$ & $0$ & $0$ & $0$ & $1$ & $\frac{1}{wfc}$ \\
& 9 & $0$ & $0$ & $0$ & $0$ & $0$ & $0$ & $0$ & $0$ & $0$ & $0$ & $0$ & $0$ & $0$ & $0$ & $\frac{-wf}{p}$ & $0$ \\
& 10 & $0$ & $0$ & $0$ & $0$ & $0$ & $0$ & $0$ & $f$ & $0$ & $\frac{-1}{a}$ & $0$ & $0$ & $0$ & $0$ & $-wf$ & $0$ \\
& 11 & $0$ & $0$ & $0$ & $0$ & $0$ & $0$ & $0$ & $0$ & $0$ & $1$ & $0$ & $0$ & $0$ & $0$ & $0$ & $0$\\
& 12  & $0$ & $0$ & $0$ & $0$ & $0$ & $0$ & $0$ & $1$ & $0$ & $0$ & $0$ & $0$ & $0$ & $0$ & $0$  & $0$ \\
& 13 & $1$ & $0$ & $0$ & $0$ & $0$ & $0$ & $0$ & $0$ & $0$ & $0$ & $0$ & $0$ & $0$ & $0$ & $0$ & $\frac{-1}{p}$ \\
& 14 & $0$ & $0$ & $0$ & $0$ & $1$ & $0$ & $0$ & $0$ & $0$ & $0$ & $\frac{-d}{n}$ & $0$ & $-wc$ & $1$ & $0$ & $-1$ \\
& 15 & $0$ & $0$ & $0$ & $0$ & $-a$ & $0$ & $0$ & $0$ & $0$ & $0$ & $\frac{da}{n}$ & $0$ & $wca$ & $0$ & $0$ & $0$  \\
& 16 & $0$ & $0$ & $0$ & $0$ & $0$ & $-wc$ & $0$ & $0$ & $0$ & $0$ & $0$ & $0$ & $0$ & $0$ & $-wc$ & $\frac{-1}{f}$ \\
\cmidrule{2-19}
& & 33 & 34 & 35 & 36 & 37 & 38 & 39 & 40 & 41 & 42 & 43 & 44 & 45 & 46 & 47 & 48 & 49 \\
\cmidrule{2-19}
& 1 & $0$ & $0$ & $0$ & $0$ & $0$ & $0$ & $0$ & $0$ & $0$ & $\frac{1}{n}$ & $0$ & $\frac{t}{pa}$ & $0$ & $0$ & $\frac{-1}{p}$ & $0$ & $0$\\
& 2 & $0$ & $0$ & $0$ & $0$ & $0$ & $0$ & $0$ & $0$ & $0$ & $0$ & $0$ & $0$ & $0$ & $0$ & $0$ & $0$ & $0$ \\
& 3 &  $0$ & $0$ & $0$ & $0$ & $0$ & $1$ & $0$ & $0$ & $1$ & $\frac{pa}{n}$ & $0$ & $t$ & $\frac{1}{n}$ & $0$ & $0$ & $0$ & $\frac{1}{n}$ \\
& 4 & $0$ & $0$ & $1$ & $0$ & $0$ & $0$ & $0$ & $0$ & $0$ & $0$ & $0$ & $1$ & $0$ & $0$ & $\frac{1}{f}$ & $0$ & $0$ \\
& 5 & $\frac{t}{pa}$ & $0$ & $\frac{-tn}{wpa}$ & $0$ & $0$ & $0$ & $0$ & $0$ & $0$ & $\frac{1}{w}$ & $0$ & $0$ & $0$ & $0$ & $0$ & $0$ & $0$ \\
& 6 & $0$ & $0$ & $0$ & $0$ & $0$ & $0$ & $0$ & $0$ & $0$ & $0$ & $0$ & $0$ & $0$ & $0$ & $0$ & $0$ & $0$ \\
& 7 & $t$ & $0$ & $0$ & $\frac{1}{w}$ & $0$ & $0$ & $0$ & $0$ & $0$ & $0$ & $0$ & $0$ & $0$ & $0$ & $0$ & $0$ & $0$ \\
& 8 & $1$ & $0$ & $0$ & $0$ & $0$ & $0$ & $0$ & $0$ & $0$ & $0$ & $0$ & $0$ & $0$ & $0$ & $0$ & $0$ & $0$ \\
& 9 & $0$ & $0$ & $0$ & $\frac{-1}{pa}$ & $0$ & $0$ & $0$ & $\frac{t}{pa}$ & $\frac{-d}{vpa}$ & $0$ & $1$ & $0$ & $0$ & $1$ & $0$ & $\frac{1}{pa}$ & $\frac{1}{ap}$ \\
& 10 & $0$ & $0$ & $\frac{-tn}{a}$ & $\frac{-1}{a}$ & $1$ & $0$ & $1$ & $0$ & $\frac{d}{va}$ & $p$ & $p$ & $0$ & $\frac{1}{a}$ & $-p$ & $n$ & $0$ & $\frac{1}{a}$ \\
& 11 & $0$ & $0$ & $nt$ & $0$ & $0$ & $0$ & $-a$ & $t$ & $\frac{-d}{v}$ & $-ap$ & $0$ & $0$ & $-1$ & $pa$ & $0$ & $1$ & $-1$ \\
& 12 & $0$ & $0$ & $0$ & $0$ & $0$ & $0$ & $0$ & $1$ & $0$ & $0$ & $0$ & $0$ & $0$ & $0$ & $\frac{n}{f}$ & $0$ & $0$ \\
& 13 & $0$ & $\frac{1}{pa}$ & $0$ & $0$ & $0$ & $0$ & $0$ & $0$ & $\frac{d}{pa}$ & $0$ & $0$ & $0$ & $0$ & $0$ & $\frac{d}{p}$ & $0$ & $\frac{c}{ap}$ \\
& 14 & $0$ & $0$ & $0$ & $0$ & $0$ & $0$ & $0$ & $0$ & $0$ & $0$ & $0$ & $0$ & $0$ & $0$ & $0$ & $0$ & $0$ \\
& 15 & $0$ & $1$ & $0$ & $0$ & $0$ & $0$ & $0$ & $0$ & $0$ & $0$ & $0$ & $0$ & $0$ & $0$ & $0$ & $0$ & $0$ \\
& 16 & $0$ & $\frac{1}{t}$ & $0$ & $\frac{c}{t}$ & $0$ & $0$ & $0$ & $0$ & $0$ & $0$ & $0$ & $0$ & $0$ & $0$ & $0$ & $0$ & $0$ \\
\bottomrule
\end{tabular}
\end{minipage}}
\end{table}

\begin{table}[htbp]
\centering
\caption{Matrix $V$ of the 13-parameter family of $\mathrm{PD}_{49}(T_{444})$s with parameters $a$, $b$, $c$, $d$, $f$, $g$, $h$, $n$, $p$, $s$, $t$, $v$, and $w$.}
\label{tab:param_R49_V}
\resizebox{\textwidth}{!}{\begin{minipage}{\textwidth}
\begin{tabular}{c c | c c c c c c c c c c c c c c c c c c}
 & & 1 & 2 & 3 & 4 & 5 & 6 & 7 & 8 & 9 & 10 & 11 & 12 & 13 & 14 & 15 & 16 \\
\cmidrule{1-18}
\multirow{50}{*}{\begin{sideways}$V$ \end{sideways}} & 1 & $0$ & $0$ & $0$ & $0$ & $0$ & $0$ & $0$ & $0$ & $0$ & $0$ & $0$ & $0$ & $0$ & $\frac{n}{wh}$ & $0$ & $0$ \\
& 2 & $0$ & $0$ & $0$ & $w$ & $0$ & $0$ & $0$ & $0$ & $0$ & $0$ & $0$ & $0$ & $0$ & $\frac{-1}{h}$ & $0$ & $0$ \\
& 3 & $0$ & $0$ & $0$ & $0$ & $0$ & $0$ & $0$ & $0$ & $0$ & $0$ & $0$ & $-c$ & $0$ & $0$ & $0$ & $0$ \\
& 4 & $0$ & $0$ & $0$ & $\frac{1}{c}$ & $0$ & $0$ & $0$ & $0$ & $0$ & $1$ & $0$ & $1$ & $0$ & $0$ & $0$ & $0$  \\
& 5 & $0$ & $0$ & $0$ & $0$ & $d$ & $0$ & $-n$ & $0$ & $0$ & $0$ & $0$ & $0$ & $0$ & $0$ & $0$ & $0$ \\
& 6 & $0$ & $0$ & $0$ & $0$ & $0$ & $0$ & $0$ & $0$ & $-w$ & $0$ & $0$ & $0$ & $0$ & $\frac{-1}{b}$ & $0$ & $wc$  \\
& 7 & $0$ & $0$ & $0$ & $0$ & $0$ & $0$ & $1$ & $1$ & $1$ & $0$ & $0$ & $0$ & $0$ & $\frac{1}{bw}$ & $0$ & $0$  \\
& 8 & $0$ & $0$ & $0$ & $0$ & $1$ & $0$ & $0$ & $0$ & $0$ & $0$ & $0$ & $0$ & $0$ & $0$ & $0$ & $1$ \\
& 9 & $1$ & $gd$ & $g$ & $0$ & $0$ & $0$ & $0$ & $0$ & $0$ & $0$ & $0$ & $0$ & $0$ & $0$ & $0$ & $0$  \\
& 10 & $0$ & $0$ & $0$ & $-wh$ & $0$ & $0$ & $0$ & $0$ & $0$ & $0$ & $0$ & $0$ & $0$ & $0$ & $0$ & $0$  \\
& 11 & $0$ & $0$ & $0$ & $0$ & $0$ & $g$ & $b$ & $b$ & $b$ & $0$ & $0$ & $ch$ & $g$ & $\frac{1}{w}$ & $0$ & $-bc$ \\
& 12 & $0$ & $0$ & $0$ & $0$ & $b$ & $0$ & $0$ & $0$ & $0$ & $0$ & $0$ & $0$ & $0$ & $0$ & $0$ & $0$  \\
& 13 & $0$ & $d$ & $1$ & $0$ & $0$ & $0$ & $0$ & $0$ & $0$ & $0$ & $0$ & $0$ & $0$ & $0$ & $0$ & $0$  \\
& 14 & $0$ & $0$ & $\frac{-w}{n}$ & $0$ & $0$ & $0$ & $0$ & $0$ & $0$ & $0$ & $0$ & $0$ & $0$ & $0$ & $1$ & $0$ \\
& 15 & $0$ & $0$ & $0$ & $0$ & $0$ & $1$ & $0$ & $0$ & $0$ & $0$ & $0$ & $0$ & $1$ & $0$ & $0$ & $\frac{-cb}{g}$ \\
& 16 & $0$ & $1$ & $0$ & $0$ & $0$ & $\frac{1}{c}$ & $0$ & $0$ & $0$ & $0$ & $1$ & $0$ & $0$ & $0$ & $0$ & $\frac{-b}{g}$  \\
\cmidrule{2-18}
& & 17 & 18 & 19 & 20 & 21 & 22 & 23 & 24 & 25 & 26 & 27 & 28 & 29 & 30 & 31 & 32\\
\cmidrule{2-18}
& 1 & $0$ & $0$ & $0$ & $\frac{-1}{h}$ & $0$ & $0$ & $0$ & $0$ & $0$ & $0$ & $0$ & $\frac{1}{w}$ & $0$ & $0$ & $0$ & $0$ \\
& 2 & $0$ & $0$ & $0$ & $\frac{w}{nh}$ & $\frac{-gwc}{h}$ & $0$ & $\frac{-1}{h}$ & $0$ & $0$ & $0$ & $0$ & $\frac{-1}{n}$ & $1$ & $0$ & $0$ & $0$ \\
& 3 & $\frac{-bc}{h}$ & $0$ & $0$ & $0$ & $0$ & $0$ & $0$ & $0$ & $0$ & $0$ & $0$ & $0$ & $0$ & $0$ & $0$ & $0$\\
& 4 & $\frac{b}{h}$ & $0$ & $0$ & $0$ & $\frac{-g}{h}$ & $0$ & $\frac{-1}{hcw}$ & $0$ & $0$ & $0$ & $0$ & $0$ & $0$ & $0$ & $0$ & $0$\\
& 5 & $d$ & $0$ & $-n$ & $\frac{1}{b}$ & $0$ & $0$ & $0$ & $\frac{-n}{b}$ & $0$ & $0$ & $0$ & $0$ & $0$ & $0$ & $0$ & $0$\\
& 6 & $0$ & $\frac{1}{b}$ & $0$ & $0$ & $0$ & $1$ & $0$ & $0$ & $0$ & $0$ & $0$ & $0$ & $0$ & $0$ & $1$ & $0$\\
& 7 & $0$ & $0$ & $1$ & $0$ & $0$ & $0$ & $0$ & $\frac{1}{b}$ & $0$ & $0$ & $0$ & $0$ & $0$ & $0$ & $0$ & $0$\\
& 8 & $1$ & $\frac{1}{wcb}$ & $0$ & $\frac{1}{db}$ & $0$ & $\frac{1}{wc}$ & $0$ & $0$ & $0$ & $0$ & $0$ & $0$ & $0$ & $0$ & $0$ & $1$\\
& 9 & $0$ & $\frac{-n}{w}$ & $0$ & $1$ & $gd$ & $0$ & $0$ & $0$ & $0$ & $0$ & $g$ & $0$ & $0$ & $0$ & $0$ & $0$\\
& 10 & $0$ & $1$ & $0$ & $\frac{-w}{n}$ & $gwc$ & $0$ & $0$ & $0$ & $1$ & $0$ & $0$ & $\frac{h}{n}$ & $-h$ & $0$ & $0$ & $0$\\
& 11 & $bc$ & $0$ & $b$ & $0$ & $0$ & $0$ & $\frac{1}{w}$ & $1$ & $0$ & $g$ & $0$ & $0$ & $0$ & $0$ & $0$ & $0$\\
& 12 & $0$ & $\frac{1}{wc}$ & $0$ & $\frac{1}{d}$ & $g$ & $\frac{b}{wc}$ & $0$ & $0$ & $0$ & $0$ & $0$ & $0$ & $0$ & $1$ & $0$ & $b$\\
& 13 & $0$ & $\frac{-n}{wg}$ & $\frac{nb}{g}$ & $0$ & $d$ & $0$ & $0$ & $0$ & $0$ & $0$ & $1$ & $0$ & $0$ & $0$ & $0$ & $0$\\
& 14 & $0$ & $\frac{1}{g}$ & $\frac{-wb}{g}$ & $0$ & $0$ & $0$ & $\frac{-1}{g}$ & $0$ & $0$ & $-w$ & $0$ & $0$ & $0$ & $0$ & $0$ & $0$\\
& 15 & $0$ & $0$ & $0$ & $0$ & $0$ & $0$ & $\frac{1}{gw}$ & $\frac{1}{g}$ & $0$ & $1$ & $0$ & $0$ & $0$ & $0$ & $0$ & $0$\\
& 16 & $0$ & $0$ & $0$ & $0$ & $1$ & $0$ & $0$ & $\frac{1}{gc}$ & $0$ & $0$ & $0$ & $0$ & $0$ & $0$ & $0$ & $0$\\
\cmidrule{2-19}
& &  33 & 34 & 35 & 36 & 37 & 38 & 39 & 40 & 41 & 42 & 43 & 44 & 45 & 46 & 47 & 48 & 49 \\
\cmidrule{2-19}
& 1 & $0$ & $0$ & $ns$ & $0$ & $0$ & $\frac{g}{h}$ & $\frac{-gn}{h}$ & $0$ & $0$ & $1$ & $0$ & $s$ & $n$ & $0$ & $0$ & $0$ & $0$\\
& 2 & $s$ & $0$ & $0$ & $0$ & $0$ & $0$ & $0$ & $0$ & $0$ & $0$ & $0$ & $0$ & $0$ & $0$ & $0$ & $0$ & $0$\\
& 3 & $0$ & $0$ & $0$ & $0$ & $0$ & $\frac{g}{hn}$ & $\frac{-g}{h}$ & $s$ & $0$ & $0$ & $\frac{-b}{h}$ & $0$ & $1$ & $0$ & $0$ & $0$ & $1$\\
& 4 & $0$ & $s$ & $0$ & $\frac{gs}{bc}$ & $0$ & $0$ & $0$ & $0$ & $0$ & $0$ & $\frac{b}{ch}$ & $0$ & $0$ & $0$ & $0$ & $0$ & $0$\\
& 5 & $0$ & $0$ & $0$ & $0$ & $0$ & $0$ & $0$ & $0$ & $\frac{dg}{b}$ & $0$ & $0$ & $1$ & $0$ & $\frac{dg}{b}$ & $1$ & $0$ & $0$\\
& 6 & $1$ & $0$ & $0$ & $\frac{-gw}{b}$ & $0$ & $0$ & $0$ & $0$ & $0$ & $0$ & $-w$ & $0$ & $0$ & $0$ & $0$ & $0$ & $0$\\
& 7 & $0$ & $0$ & $1$ & $0$ & $0$ & $0$ & $0$ & $1$ & $0$ & $0$ & $1$ & $0$ & $0$ & $\frac{-vg}{b}$ & $0$ & $1$ & $0$\\
& 8 & $0$ & $1$ & $0$ & $0$ & $0$ & $0$ & $0$ & $0$ & $0$ & $0$ & $0$ & $0$ & $0$ & $0$ & $0$ & $0$ & $0$\\
& 9 & $0$ & $0$ & $0$ & $0$ & $0$ & $0$ & $0$ & $0$ & $0$ & $0$ & $0$ & $0$ & $0$ & $0$ & $0$ & $0$ & $0$\\
& 10 & $0$ & $0$ & $0$ & $0$ & $0$ & $0$ & $0$ & $0$ & $0$ & $0$ & $0$ & $0$ & $0$ & $0$ & $0$ & $0$ & $0$\\
& 11 & $0$ & $0$ & $b$ & $g$ & $1$ & $\frac{-g}{n}$ & $g$ & $0$ & $0$ & $0$ & $b$ & $0$ & $-h$ & $-vg$ & $0$ & $0$ & $-h$\\
&12 & $0$ & $0$ & $0$ & $0$ & $0$ & $0$ & $0$ & $0$ & $0$ & $0$ & $0$ & $0$ & $0$ & $0$ & $0$ & $0$ & $0$\\
& 13 & $0$ & $0$ & $0$ & $0$ & $0$ & $1$ & $0$ & $0$ & $0$ & $0$ & $0$ & $\frac{-b}{g}$ & $0$ & $0$ & $0$ & $0$ & $0$\\
&14 & $\frac{-b}{g}$ & $0$ & $\frac{-bw}{g}$ & $0$ & $0$ & $0$ & $-w$ & $0$ & $0$ & $0$ & $0$ & $0$ & $0$ & $0$ & $0$ & $0$ & $0$\\
& 15 & $0$ & $0$ & $0$ & $1$ & $0$ & $0$ & $1$ & $\frac{-b}{g}$ & $0$ & $0$ & $0$ & $0$ & $0$ & $v$ & $0$ & $\frac{-b}{g}$ & $0$\\
& 16 & $0$ & $\frac{-b}{g}$ & $0$ & $0$ & $0$ & $\frac{1}{d}$ & $0$ & $0$ & $1$ & $0$ & $0$ & $0$ & $0$ & $1$ & $0$ & $0$ & $0$\\
\bottomrule
\end{tabular}
\end{minipage}}
\end{table}

\begin{table}[htbp]
\caption{Matrix $W$ of the 13-parameter family of $\mathrm{PD}_{49}(T_{444})$s with parameters $a$, $b$, $c$, $d$, $f$, $g$, $h$, $n$, $p$, $s$, $t$, $v$, $w$ and where $a_1 = \frac{t(b-hs)+fab}{bpsa}$ and $a_2 = \frac{fab-t(b+hs)}{sg}$.}
\label{tab:param_R49_W}
\resizebox{\textwidth}{!}{\begin{minipage}{\textwidth}
\begin{tabular}{ c | c c c c c c c c c c c c c c c c c c}
 & 1 & 2 & 3 & 4 & 5 & 6 & 7 & 8 & 9 & 10 & 11 & 12 & 13 & 14 & 15 & 16 \\
\cmidrule{1-17}
1 & $0$ & $0$ & $0$ & $0$ & $0$ & $0$ & $0$ & $0$ & $\frac{h}{b}$ & $0$ & $0$ & $\frac{p}{c}$ & $0$ & $-h$ & $0$ & $0$\\
2 & $0$ & $0$ & $0$ & $0$ & $0$ & $0$ & $\frac{-p}{f}$ & $\frac{p}{f}$ & $-1$ & $0$ & $0$ & $\frac{-bp}{ch}$ & $0$ & $0$ & $0$ & $0$\\
 3 & $-p$ & $0$ & $0$ & $0$ & $\frac{-1}{db}$ & $0$ & $0$ & $0$ & $\frac{1}{b}$ & $0$ & $0$ & $\frac{p}{ch}$ & $0$ & $-1$ & $0$ & $0$\\
4 & $0$ & $0$ & $0$ & $0$ & $0$ & $0$ & $\frac{-pg}{bf}$ & $\frac{pg}{bf}$ & $0$ & $0$ & $0$ & $0$ & $0$ & $0$ & $0$ & $0$\\
5 & $0$ & $0$ & $0$ & $a$ & $0$ & $0$ & $0$ & $0$ & $0$ & $-a$ & $0$ & $\frac{-1}{c}$ & $0$ & $0$ & $0$ & $0$\\
6 & $0$ & $0$ & $0$ & $0$ & $\frac{1}{pd}$ & $0$ & $\frac{1}{f}$ & $\frac{-1}{f}$ & $\frac{1}{p}$ & $0$ & $0$ & $0$ & $0$ & $0$ & $0$ & $0$\\
7 & $1$ & $0$ & $0$ & $0$ & $0$ & $0$ & $0$ & $0$ & $0$ & $0$ & $0$ & $0$ & $0$ & $0$ & $0$ & $0$\\
8 & $-g$ & $a$ & $\frac{1}{wf}$ & $0$ & $0$ & $\frac{-1}{f}$ & $0$ & $0$ & $0$ & $0$ & $\frac{-a}{d}$ & $0$ & $-a$ & $0$ & $\frac{-1}{f}$ & $0$\\
9 & $0$ & $0$ & $0$ & $0$ & $0$ & $0$ & $0$ & $0$ & $0$ & $-1$ & $0$ & $\frac{-1}{ca}$ & $0$ & $0$ & $0$ & $0$\\
10 & $0$ & $0$ & $0$ & $0$ & $0$ & $\frac{-b}{agf}$ & $0$ & $0$ & $0$ & $s$ & $\frac{-b}{dg}$ & $\frac{b}{ach}$ & $\frac{-b}{g}$ & $0$ & $0$ & $\frac{-1}{t}$\\
11 & $0$ & $\frac{1}{g}$ & $0$ & $\frac{-1}{h}$ & $0$ & $0$ & $0$ & $0$ & $0$ & $0$ & $0$ & $0$ & $0$ & $0$ & $0$ & $0$\\ 
12 & $0$ & $0$ & $0$ & $0$ & $0$ & $\frac{-1}{fa}$ & $0$ & $0$ & $0$ & $0$ & $\frac{-1}{d}$ & $0$ & $-1$ & $0$ & $0$ & $0$\\
13 & $0$ & $0$ & $0$ & $0$ & $0$ & $0$ & $0$ & $\frac{-1}{s}$ & $a_1$ & $0$ & $0$ & $0$ & $\frac{fab}{sg}$ & $\frac{th}{pa}$ & $\frac{b}{sg}$ & $0$\\
14 & $0$ & $0$ & $0$ & $0$ & $0$ & $\frac{b}{g}$ & $-1$ & $1$ & $\frac{-f}{p}$ & $0$ & $0$ & $0$ & $0$ & $0$ & $0$ & $1$\\
15 & $0$ & $0$ & $\frac{-1}{gw}$ & $0$ & $0$ & $\frac{-1}{g}$ & $\frac{1}{b}$ & $0$ & $0$ & $0$ & $0$ & $0$ & $0$ & $0$ & $0$ & $\frac{-1}{b}$\\
 16 & $0$ & $0$ & $0$ & $0$ & $0$ & $1$ & $\frac{-g}{b}$ & $0$ & $0$ & $0$ & $0$ & $0$ & $fa$ & $0$ & $1$ & $0$\\
\cmidrule{1-17}
 & 17 & 18 & 19 & 20 & 21 & 22 & 23 & 24 & 25 & 26 & 27 & 28 & 29 & 30 & 31 & 32\\
\cmidrule{1-17}
 1 & $0$ & $0$ & $0$ & $0$ & $0$ & $0$ & $0$ & $0$ & $0$ & $0$ & $0$ & $0$ & $0$ & $0$ & $0$ & $0$\\
 2 & $1$ & $0$ & $0$ & $0$ & $0$ & $0$ & $0$ & $0$ & $0$ & $0$ & $0$ & $0$ & $0$ & $0$ & $0$ & $0$\\
 3 & $\frac{-1}{b}$ & $0$ & $0$ & $1$ & $0$ & $0$ & $0$ & $0$ & $0$ & $0$ & $0$ & $\frac{n}{h}$ & $0$ & $0$ & $0$ & $0$\\
4 & $0$ & $0$ & $\frac{pag}{t}$ & $0$ & $0$ & $0$ & $0$ & $0$ & $0$ & $0$ & $0$ & $0$ & $0$ & $0$ & $0$ & $0$\\
5 & $0$ & $0$ & $0$ & $0$ & $0$ & $0$ & $0$ & $0$ & $h$ & $0$ & $0$ & $\frac{n}{p}$ & $-1$ & $0$ & $0$ & $0$\\
6 & $0$ & $0$ & $0$ & $0$ & $0$ & $\frac{1}{f}$ & $0$ & $0$ & $0$ & $0$ & $0$ & $0$ & $0$ & $\frac{-b}{v}$ & $\frac{-1}{f}$ & $-1$\\
7 & $0$ & $0$ & $0$ & $0$ & $0$ & $0$ & $0$ & $0$ & $1$ & $0$ & $0$ & $0$ & $0$ & $\frac{1}{v}$ & $0$ & $0$\\
8 & $0$ & $0$ & $0$ & $0$ & $0$ & $0$ & $0$ & $0$ & $0$ & $-a$ & $n$ & $0$ & $0$ & $0$ & $0$ & $0$\\
9 & $0$ & $0$ & $0$ & $0$ & $0$ & $0$ & $-h$ & $0$ & $0$ & $\frac{h}{g}$ & $0$ & $0$ & $0$ & $0$ & $0$ & $0$\\
10 & $\frac{-1}{pa}$ & $0$ & $0$ & $0$ & $0$ & $0$ & $0$ & $0$ & $0$ & $0$ & $0$ & $0$ & $0$ & $0$ & $0$ & $0$\\
11 & $0$ & $0$ & $0$ & $0$ & $\frac{1}{ga}$ & $0$ & $-1$ & $0$ & $0$ & $\frac{1}{g}$ & $0$ & $0$ & $0$ & $\frac{1}{va}$ & $0$ & $0$\\
12 & $0$ & $0$ & $0$ & $0$ & $0$ & $0$ & $0$ & $0$ & $0$ & $-1$ & $0$ & $0$ & $0$ & $0$ & $0$ & $0$\\
13 & $0$ & $0$ & $0$ & $0$ & $0$ & $0$ & $th$ & $0$ & $0$ & $a_2$ & $0$ & $0$ & $0$ & $0$ & $\frac{-1}{s}$ & $0$\\
14 & $0$ & $0$ & $0$ & $0$ & $0$ & $0$ & $0$ & $b$ & $0$ & $0$ & $0$ & $0$ & $0$ & $0$ & $1$ & $0$\\
15 & $0$ & $1$ & $1$ & $0$ & $0$ & $\frac{-1}{b}$ & $0$ & $-1$ & $-f$ & $0$ & $0$ & $0$ & $0$ & $0$ & $0$ & $0$\\
 16 & $0$ & $0$ & $-g$ & $0$ & $0$ & $0$ & $0$ & $g$ & $0$ & $fa$ & $0$ & $0$ & $0$ & $0$ & $0$ & $0$\\
\cmidrule{1-18}
 &  33 & 34 & 35 & 36 & 37 & 38 & 39 & 40 & 41 & 42 & 43 & 44 & 45 & 46 & 47 & 48 & 49 \\
\cmidrule{1-18}
1 & $0$ & $0$ & $0$ & $0$ & $hp$ & $0$ & $0$ & $0$ & $0$ & $n$ & $\frac{-h}{b}$ & $0$ & $-ap$ & $0$ & $0$ & $0$ & $ap$\\
 2 & $0$ & $0$ & $0$ & $0$ & $-bp$ & $0$ & $0$ & $0$ & $0$ & $0$ & $1$ & $0$ & $0$ & $0$ & $-p$ & $0$ & $0$\\
3 & $0$ & $0$ & $0$ & $0$ & $p$ & $0$ & $0$ & $0$ & $0$ & $0$ & $\frac{-1}{b}$ & $0$ & $0$ & $0$ & $0$ & $0$ & $0$\\
4 & $0$ & $0$ & $\frac{pag}{bt}$ & $0$ & $0$ & $pa$ & $p$ & $0$ & $\frac{pa}{d}$ & $\frac{sgn}{b}$ & $0$ & $\frac{-ag}{bt}$ & $\frac{pga}{h}$ & ${1}$ & $\frac{-pg}{b}$ & $0$ & $0$\\
 5 & $0$ & $0$ & $0$ & $0$ & $-h$ & $0$ & $0$ & $0$ & $0$ & $\frac{-n}{p}$ & $0$ & $0$ & $a$ & $0$ & $0$ & $0$ & $-a$\\
 6 & $0$ & $0$ & $0$ & $0$ & $b$ & $0$ & $0$ & $0$ & $0$ & $0$ & $0$ & $0$ & $0$ & $0$ & $1$ & $0$ & $0$\\
7 & $0$ & $0$ & $0$ & $0$ & $-1$ & $0$ & $0$ & $0$ & $0$ & $0$ & $0$ & $0$ & $0$ & $0$ & $0$ & $0$ & $0$\\
8 & $0$ & $0$ & $0$ & $0$ & $g$ & $0$ & $0$ & $0$ & $0$ & $0$ & $0$ & $0$ & $0$ & $0$ & $0$ & $0$ & $0$\\
9 & $0$ & $0$ & $0$ & $0$ & $\frac{-h}{a}$ & $0$ & $\frac{h}{ga}$ & $0$ & $0$ & $0$ & $0$ & $0$ & $1$ & $0$ & $0$ & $0$ & $-1$\\
10 & $0$ & $1$ & $0$ & $\frac{-b}{g}$ & $\frac{2b}{a}$ & $0$ & $0$ & $0$ & $\frac{b}{dg}$ & $0$ & $\frac{-1}{ap}$ & $0$ & $0$ & $\frac{b}{gpa}$ & $0$ & $2$ & $0$\\
 11 & $0$ & $0$ & $0$ & $0$ & $\frac{-1}{a}$ & $\frac{1}{g}$ & $\frac{1}{ga}$ & $0$ & $0$ & $0$ & $0$ & $0$ & $\frac{1}{h}$ & $0$ & $0$ & $0$ & $0$\\
12 & $0$ & $0$ & $0$ & $0$ & $\frac{g}{a}$ & $-1$ & $\frac{-1}{a}$ & $0$ & $0$ & $0$ & $0$ & $0$ & $\frac{-g}{h}$ & $0$ & $0$ & $0$ & $0$\\
13 & $\frac{1}{s}$ & $0$ & $\frac{1}{s}$ & $\frac{tb}{gs}$ & $0$ & $0$ & $\frac{-th}{ga}$ & $\frac{1}{s}$ & $0$ & $0$ & $\frac{th}{bpa}$ & $0$ & $0$ & $0$ & $0$ & $\frac{-2t}{s}$ & $0$\\
14 & $0$ & $0$ & $0$ & $0$ & $-bf$ & $0$ & $0$ & $0$ & $0$ & $0$ & $0$ & $0$ & $0$ & $0$ & $0$ & $0$ & $0$\\
15 & $0$ & $0$ & $0$ & $0$ & $f$ & $0$ & $0$ & $0$ & $0$ & $0$ & $0$ & $0$ & $0$ & $0$ & $0$ & $0$ & $0$\\
16 & $0$ & $0$ & $0$ & $0$ & $-gf$ & $0$ & $0$ & $0$ & $0$ & $0$ & $0$ & $0$ & $0$ & $0$ & $0$ & $0$ & $0$\\
\bottomrule
\end{tabular}
\end{minipage}}
\end{table}

\begin{table}[htbp]
\centering
\caption{Submatrix $A$ of the CS five-parameter family of $\mathrm{PD}_{49}(T_{444})$s with $S=13$ and $T=12$, and parameters $a$, $b$, $d$, $g$, and $h$, and where $c = \frac{b^2(1+ag)}{1+abg}$.}
\label{tab:5param_R49_S13_A}
\begin{tabular}{c c | c c c c c c c c c c c c c c c c c}
\multirow{17}{*}{\begin{sideways}$A$ \end{sideways}} & & 1 & 2 & 3 & 4 & 5 & 6 & 7 & 8 & 9 & 10 & 11 & 12 & 13 \\
 \cmidrule{1-15}
 & 1 & $0$ & $0$ & $0$ & $1$ & $0$ & $-1$ & $0$ & $0$ & $1$ & $0$ & $0$ & $0$ & $0$ \\
 & 2 & $0$ & $0$ & $0$ & $\frac{a}{b}$ & $0$ & $\frac{-a}{b}$ & $0$ & $0$ & $0$ & $\frac{-a}{c}$ & $0$ & $\frac{a}{c}$ & $0$\\
 & 3 & $0$ & $0$ & $0$ & $\frac{1}{b}$ & $0$ & $\frac{-1}{b}$ & $0$ & $0$ & $0$ & $0$ & $0$ & $0$ & $0$\\
 & 4 & $0$ & $\frac{-a}{cd}$ & $0$ & $0$ & $0$ & $0$ & $0$ & $0$ & $0$ & $0$ & $0$ & $0$ & $\frac{a}{cd}$\\
 & 5 & $0$ & $0$ & $\frac{-c}{a}$ & $0$ & $0$ & $0$ & $0$ & $0$ & $0$ & $\frac{c}{a}$ & $0$ & $0$ & $0$\\
 & 6 & $0$ & $0$ & $0$ & $0$ & $0$ & $0$ & $0$ & $0$ & $0$ & $0$ & $0$ & $0$ & $1$\\
 & 7 & $\frac{1}{a}$ & $0$ & $\frac{-1}{a}$ & $0$ & $\frac{-1}{a}$ & $0$ & $0$ & $0$ & $0$ & $\frac{1}{a}$ & $\frac{1}{a}$ & $\frac{-1}{a}$ & $0$\\
 & 8 & $\frac{1}{d}$ & $0$ & $0$ & $0$ & $\frac{-1}{d}$ & $0$ & $0$ & $0$ & $0$ & $0$ & $\frac{1}{d}$ & $\frac{-1}{d}$ & $0$\\
 & 9 & $0$ & $0$ & $c$ & $0$ & $0$ & $b$ & $0$ & $0$ & $-b$ & $-c$ & $0$ & $0$ & $0$\\
 & 10 & $-a$ & $0$ & $0$ & $0$ & $0$ & $0$ & $a$ & $0$ & $0$ & $0$ & $0$ & $0$ & $0$\\
 & 11 & $-1$ & $0$ & $1$ & $0$ & $1$ & $0$ & $1$ & $0$ & $0$ & $-1$ & $-1$ & $1$ & $0$\\
 & 12 & $0$ & $\frac{a}{d}$ & $0$ & $0$ & $\frac{a}{d}$ & $0$ & $0$ & $0$ & $0$ & $0$ & $\frac{-a}{d}$ & $0$ & $\frac{-a}{d}$\\
 & 13 & $0$ & $\frac{cd}{a}$ & $\frac{-cd}{a}$ & $0$ & $0$ & $0$ & $0$ & $\frac{-cd}{a}$ & $0$ & $\frac{cd}{a}$ & $\frac{cd}{a}$ & $\frac{-cd}{a}$ & $0$\\
 & 14 & $0$ & $0$ & $0$ & $0$ & $0$ & $0$ & $0$ & $0$ & $0$ & $0$ & $0$ & $0$ & $0$\\
 & 15 & $0$ & $0$ & $\frac{-d}{a}$ & $0$ & $0$ & $0$ & $0$ & $0$ & $0$ & $\frac{d}{a}$ & $\frac{d}{a}$ & $\frac{-d}{a}$ & $0$\\
 & 16 & $0$ & $-1$ & $0$ & $0$ & $0$ & $0$ & $0$ & $1$ & $0$ & $0$ & $0$ & $0$ & $1$\\
\bottomrule
\end{tabular}
\end{table}

\begin{table}[htbp]
\centering
\caption{Submatrices $B$, $C$, and $D$ of the CS five-parameter family of $\mathrm{PD}_{49}(T_{444})$s with $S=13$ and $T=12$, and parameters $a$, $b$, $d$, $g$, and $h$, and where $c = \frac{b^2(1+ag)}{1+abg}$.}
\label{tab:5param_R49_S13_BCD}
\resizebox{0.9 \textwidth}{!}{\begin{minipage}{\textwidth}
\begin{tabular}{c c | c c c c c c c c c c c c c}
\multirow{17}{*}{\begin{sideways}$B$ \end{sideways}} & & 1 & 2 & 3 & 4 & 5 & 6 & 7 & 8 & 9 & 10 & 11 & 12 \\
 \cmidrule{1-14}
  & 1 & $0$ & $0$ & $0$ & $0$ & $0$ & $0$ & $0$ & $0$ & $0$ & $0$ & $1$ & $0$\\
 & 2 & $\frac{-ah}{cd}$ & $0$ & $0$ & $0$ & $0$ & $0$ & $1$ & $1$ & $\frac{-d}{c}$ & $0$ & $\frac{-1}{gb^2}$ & $0$\\
 & 3 & $0$ & $0$ & $0$ & $0$ & $0$ & $0$ & $0$ & $0$ & $0$ & $0$ & $\frac{1}{b}$ & $0$\\
 & 4 & $\frac{a}{cd}$ & $0$ & $0$ & $0$ & $0$ & $0$ & $0$ & $0$ & $0$ & $0$ & $0$ & $0$\\
 & 5 & $0$ & $g$ & $0$ & $0$ & $0$ & $0$ & $0$ & $0$ & $0$ & $0$ & $0$ & $0$\\
 & 6 & $0$ & $0$ & $0$ & $-h$ & $0$ & $0$ & $0$ & $0$ & $0$ & $0$ & $0$ & $a$\\
 & 7 & $0$ & $0$ & $0$ & $0$ & $0$ & $1$ & $0$ & $0$ & $0$ & $0$ & $0$ & $1$\\
 & 8 & $0$ & $0$ & $0$ & $1$ & $0$ & $\frac{a}{d}$ & $0$ & $0$ & $0$ & $0$ & $0$ & $0$\\
 & 9 & $0$ & $1$ & $0$ & $0$ & $0$ & $0$ & $0$ & $0$ & $0$ & $0$ & $-b$ & $0$\\
 & 10 & $\frac{ah}{d}$ & $0$ & $0$ & $0$ & $0$ & $0$ & $0$ & $-c$ & $0$ & $0$ & $\frac{1}{g}$ & $-a^2$\\
 & 11 & $0$ & $0$ & $0$ & $0$ & $0$ & $0$ & $0$ & $0$ & $0$ & $0$ & $-1$ & $-a$\\
 & 12 & $\frac{-a}{d}$ & $0$ & $0$ & $0$ & $0$ & $0$ & $0$ & $0$ & $1$ & $0$ & $0$ & $0$\\
 & 13 & $0$ & $0$ & $1$ & $0$ & $c$ & $0$ & $0$ & $\frac{c^2dg}{a}$ & $0$ & $-gc$ & $0$ & $0$\\
 & 14 & $-h$ & $0$ & $0$ & $0$ & $0$ & $0$ & $0$ & $0$ & $0$ & $1$ & $0$ & $ad$\\
 & 15 & $0$ & $0$ & $0$ & $0$ & $1$ & $0$ & $0$ & $\frac{cdg}{a}$ & $0$ & $-g$ & $0$ & $d$\\
 & 16 & $1$ & $0$ & $0$ & $0$ & $0$ & $0$ & $0$ & $0$ & $0$ & $0$ & $0$ & $0$\\
 \cmidrule{1-14}
 \multirow{17}{*}{\begin{sideways}$C$ \end{sideways}} & & 1 & 2 & 3 & 4 & 5 & 6 & 7 & 8 & 9 & 10 & 11 & 12 \\
 \cmidrule{1-14}
  & 1 & $0$ & $0$ & $1$ & $0$ & $0$ & $0$ & $0$ & $0$ & $0$ & $0$ & $0$ & $\frac{-1}{d}$\\
 & 2 & $0$ & $0$ & $0$ & $0$ & $0$ & $0$ & $0$ & $0$ & $0$ & $0$ & $0$ & $0$\\
& 3 & $0$ & $\frac{-1}{gc}$ & $0$ & $0$ & $0$ & $0$ & $0$ & $0$ & $0$ & $0$ & $0$ & $\frac{-1}{cd}$\\
& 4 & $0$ & $\frac{-a}{cdg}$ & $\frac{-a}{cd}$ & $\frac{a}{cdh}$ & $0$ & $0$ & $0$ & $0$ & $0$ & $0$ & $0$ & $0$\\
& 5 & $0$ & $0$ & $0$ & $0$ & $\frac{-cdg}{a^2}$ & $0$ & $\frac{1}{a}$ & $-1$ & $0$ & $1$ & $0$ & $0$\\
 & 6 & $\frac{-1}{h}$ & $0$ & $0$ & $0$ & $0$ & $0$ & $\frac{1}{c}$ & $0$ & $\frac{1}{d}$ & $0$ & $0$ & $0$\\
 & 7 & $0$ & $\frac{-1}{ag}$ & $0$ & $0$ & $0$ & $0$ & $\frac{1}{ac}$ & $0$ & $\frac{1}{ad}$ & $0$ & $0$ & $0$\\
 & 8 & $0$ & $\frac{-1}{gd}$ & $0$ & $0$ & $\frac{g}{a}$ & $0$ & $0$ & $\frac{a}{cd}$ & $0$ & $\frac{-a}{cd}$ & $0$ & $0$\\
 & 9 & $0$ & $0$ & $-c$ & $0$ & $\frac{-cd}{a^2}$ & $0$ & $-1$ & $\frac{-1}{g}$ & $0$ & $0$ & $-1$ & $\frac{c}{d}$\\
 & 10 & $0$ & $0$ & $0$ & $0$ & $0$ & $\frac{1}{a}$ & $\frac{-a}{c}$ & $0$ & $\frac{-a}{d}$ & $0$ & $0$ & $0$\\
 & 11 & $0$ & $\frac{1}{g}$ & $0$ & $0$ & $0$ & $0$ & $\frac{-1}{c}$ & $0$ & $\frac{-1}{d}$ & $0$ & $0$ & $\frac{1}{d}$\\
 & 12 & $0$ & $\frac{a}{dg}$ & $\frac{a}{d}$ & $\frac{-a}{dh}$ & $\frac{1}{a}$ & $\frac{-1}{ad}$ & $0$ & $0$ & $0$ & $0$ & $0$ & $0$\\
 & 13 & $0$ & $0$ & $\frac{cd}{a}$ & $0$ & $0$ & $0$ & $\frac{d}{a}$ & $0$ & $\frac{c}{a}$ & $0$ & $0$ & $\frac{-c}{a}$\\
 & 14 & $0$ & $0$ & $0$ & $0$ & $0$ & $0$ & $\frac{d}{c}$ & $0$ & $1$ & $0$ & $0$ & $0$\\
 & 15 & $0$ & $\frac{-d}{ag}$ & $0$ & $0$ & $0$ & $0$ & $\frac{d}{ac}$ & $0$ & $\frac{1}{a}$ & $0$ & $0$ & $\frac{-1}{a}$\\
& 16 & $0$ & $\frac{-1}{g}$ & $-1$ & $\frac{1}{h}$ & $0$ & $0$ & $0$ & $0$ & $0$ & $0$ & $0$ & $0$\\
 \cmidrule{1-14}
 \multirow{17}{*}{\begin{sideways}$D$ \end{sideways}} & & 1 & 2 & 3 & 4 & 5 & 6 & 7 & 8 & 9 & 10 & 11 & 12 \\
 \cmidrule{1-14}
  & 1 & $0$ & $0$ & $0$ & $0$ & $0$ & $0$ & $a$ & $0$ & $a$ & $0$ & $0$ & $0$\\
 & 2 & $0$ & $1$ & $0$ & $\frac{ah}{cd}$ & $0$ & $0$ & $0$ & $\frac{a}{c}$ & $0$ & $-d$ & $a$ & $0$\\
 & 3 & $0$ & $0$ & $0$ & $0$ & $0$ & $0$ & $\frac{a}{c}$ & $0$ & $\frac{a}{c}$ & $0$ & $1$ & $0$\\
 & 4 & $\frac{ah}{cd}$ & $0$ & $1$ & $0$ & $0$ & $0$ & $0$ & $\frac{-a}{cd}$ & $0$ & $1$ & $0$ & $0$\\
 & 5 & $0$ & $0$ & $0$ & $0$ & $0$ & $0$ & $c$ & $0$ & $0$ & $0$ & $0$ & $0$\\
 & 6 & $0$ & $0$ & $0$ & $1$ & $0$ & $a$ & $0$ & $0$ & $0$ & $0$ & $0$ & $0$\\
 & 7 & $0$ & $0$ & $0$ & $0$ & $1$ & $1$ & $1$ & $\frac{-1}{a}$ & $1$ & $0$ & $0$ & $0$\\
 & 8 & $1$ & $0$ & $0$ & $0$ & $\frac{a}{d}$ & $0$ & $0$ & $\frac{-1}{d}$ & $\frac{a}{d}$ & $0$ & $0$ & $0$\\
 & 9 & $0$ & $0$ & $0$ & $0$ & $0$ & $0$ & $-ac$ & $0$ & $-ac$ & $0$ & $0$ & $0$\\
 & 10 & $0$ & $-c$ & $0$ & $\frac{-ah}{d}$ & $0$ & $-a^2$ & $0$ & $0$ & $0$ & $0$ & $0$ & $0$\\
 & 11 & $0$ & $0$ & $0$ & $0$ & $-a$ & $-a$ & $-a$ & $1$ & $-a$ & $0$ & $0$ & $0$\\
 & 12 & $\frac{-ah}{d}$ & $0$ & $-c$ & $0$ & $\frac{-a^2}{d}$ & $0$ & $0$ & $\frac{a}{d}$ & $0$ & $0$ & $0$ & $1$\\
 & 13 & $0$ & $0$ & $0$ & $0$ & $0$ & $0$ & $cd$ & $0$ & $cd$ & $0$ & $0$ & $0$\\
 & 14 & $0$ & $0$ & $0$ & $h$ & $0$ & $0$ & $0$ & $0$ & $0$ & $0$ & $0$ & $0$\\
 & 15 & $0$ & $0$ & $0$ & $0$ & $d$ & $0$ & $d$ & $\frac{-d}{a}$ & $d$ & $0$ & $0$ & $0$\\
 & 16 & $h$ & $0$ & $0$ & $0$ & $a$ & $0$ & $0$ & $-1$ & $0$ & $0$ & $0$ & $0$\\
\bottomrule
\end{tabular}
\end{minipage}}
\end{table}

\subsection{Approximate Decompositions} \label{sec:approx_decomp}

In this subsection, we give the cost function and corresponding values of $u$ and $l$ of the most accurate numerical PDs of lower rank we were able to obtain. For these experiments we use many (thousand) random starting points (close to zero, e.g., of size $10^{-2}$) and set $u:=-l:=1$. We do not use any equality constraints and let the method run for a fixed number of iterations, e.g., 1000. Afterwards, we use the most accurate approximate PD that is obtained from these experiments as starting point and gradually increase $u$ and $-l$. The results are summarized in \Cref{tab:results_approx} for different values of $u$ and $l$. In this table, $\tilde{x}$ denotes the approximate PD and $u_{\max}$ and $l_{\max}$ are the highest values of the bounds (in absolute value) that we are able to obtain. If we increase the bounds further, the AL method is not able to converge to a more accurate PD. 

\begin{table}[htbp]
\centering
\caption{Overview of the most accurate approximate PDs $\tilde{x}$ for different values of $u$ and $l$.}
\label{tab:results_approx}
\begin{tabular}{c c c | c c c c}
\multirow{2}{*}{\mbox{ \boldmath $m$}} & \multirow{2}{*}{\mbox{ \boldmath $p$}} & \multirow{2}{*}{\mbox{ \boldmath $n$}} & \multirow{2}{*}{\mbox{ \boldmath $R$}} & \multirow{2}{*}{\mbox{ \boldmath $u_{\max} = -l_{\max}$}} & \multicolumn{2}{c}{ \mbox{ \boldmath $f(\tilde{x})$}} \\
 &  & & & & \mbox{ \boldmath $u:=-l:=1$} & \mbox{ \boldmath $u_{\max}$, $l_{\max}$} \\
\midrule
3 & 3 & 3 & 22 & $168$  & $5\cdot 10^{-3}$  & $2 \cdot 10^{-15}$ \\
2 & 4 & 5 & 32 & $29$ & $6 \cdot 10^{-4}$ & $2\cdot 10^{-19}$ \\
4 & 4 & 4 & 48 & $137$ & $5\cdot 10^{-3}$  & $2 \cdot 10^{-14}$ \\
\end{tabular}
\end{table}

For $T_{333}$, we tried to lower the rank to 22 using more than thousand random starting points, described in \Cref{sec:starting_points}. However, when we bound the elements to the interval $[-1,1]$, the most accurate PD we are able to obtain has a cost function $f(\tilde{x})$ of $5\cdot 10^{-3}$. When we make the interval gradually larger till $u:=-l:=168$, the most accurate PD we are able to obtain has a cost function of approximately $2\cdot 10^{-15}$. 

Then, for $T_{245}$, rank 32, the most accurate numerical PD has a cost function of approximately $6 \cdot 10^{-4}$ for $u:=-l:=1$, and we can increase $u$ and $-l$ to $29$ to obtain a cost function of approximately $2\cdot 10^{-19}$. 

Lastly, for rank 48 of $T_{444}$, the most accurate numerical PD has a cost function of approximately $5 \cdot 10^{-3}$ for $u:=-l:=1$ and $2\cdot 10^{-14}$ for $u:=-l:=137$. 

\section{Conclusion}\label{sec13}

Using the AL method with specific equality and inequality constraints, we are able to obtain new parametrizations of various PDs of the matrix multiplication tensor and based on these parametrizations, we obtain more stable and faster practical FMM algorithms. Additionally, we find approximate PDs of lower rank by bounding the factor matrix elements in absolute value and choosing smart starting points. However, we are not able to obtain new exact PDs of lower rank and suspect that the algorithm converges to BRPDs for these ranks. It thus remains an open question if exact PDs of lower rank exist.

\section{Declarations}

\paragraph{Funding}
The first author was supported by the Fund for Scientific Research–Flanders (Belgium), EOS Project no 30468160.
The second author was supported by the Research Council KU Leuven, C1-project C14/17/073 and by the Fund for Scientific Research–Flanders (Belgium), EOS Project no 30468160 and project G0B0123N.

\paragraph{Code availability}
Our \Matlab code, e.g., the implementation of the AL method, used to obtain the results in this paper is publicly available.\footnote{URL: \url{https://github.com/CharlotteVermeylen/ALM_FMM_stability}} 

\paragraph{Authors' contributions}
C.V.\ wrote the main manuscript text and M.V.B.\ reviewed the manuscript.

\paragraph{Acknowledgments}
We would like to thank Panos Patrinos and Masoud Ahookhosk for their insightful discussions about the AL method and the LM method. 

\paragraph{Ethical Approval}
Not applicable.  

\paragraph{Availability of supporting data}
Not applicable.  

\paragraph{Competing interests}
Not applicable.






\bibliography{sn-bibliography}

\end{document}